\newtheorem{definition}{Definition}
\newtheorem{theorem}{Theorem}
\newtheorem{lemma}[theorem]{Lemma}
\newtheorem{corollary}[theorem]{Corollary}
\theoremstyle{plain}\newtheorem*{the 1}{Theorem 1}
\theoremstyle{definition}\newtheorem{case}{Case}
\theoremstyle{definition}\newtheorem*{case 1.1}{Case 1.1}
\theoremstyle{definition}\newtheorem*{case 1.2}{Case 1.2}
\theoremstyle{definition}\newtheorem*{case 2.1}{Case 2.1}
\theoremstyle{definition}\newtheorem*{case 2.2}{Case 2.2}
\theoremstyle{definition}\newtheorem*{case 3.1}{Case 3.1}
\theoremstyle{definition}\newtheorem*{case 3.2}{Case 3.2}
\theoremstyle{definition}\newtheorem*{case 3.3}{Case 3.3}
\theoremstyle{definition}\newtheorem*{case 3.3.1}{Case 3.3.1}
\theoremstyle{definition}\newtheorem*{case 3.3.2}{Case 3.3.2}
\title{A minimum semi-degree sufficient condition for one-to-many disjoint path covers in semicomplete digraphs}
\author{Ansong Ma$^{1}$,  Yuefang Sun$^{2,}$\footnote{Corresponding author. 
}, Xiaoyan Zhang$^{3}$
\\
$^{1}$ School of Mathematics and Statistics,
Ningbo University\\
Zhejiang 315211,  China, mas0710@163.com\\
$^{2}$ School of Mathematics and Statistics,
Ningbo University\\
Zhejiang 315211, China, sunyuefang@nbu.edu.cn\\
$^{3}$ School of Mathematical Science $\&$ Institute of Mathematics,\\Nanjing Normal University, Jiangsu 210023, China,\\ zhangxiaoyan@njnu.edu.cn}
\date{}
\begin{document}
	\maketitle
		\begin{abstract}
		 Let $D$ be a digraph. We define the minimum semi-degree of $D$ as $\delta^{0}(D) := \min \{\delta^{+}(D), \delta^{-}(D)\}$. Let $k$ be a positive integer, and let $S = \{s\}$ and $T = \{t_{1}, \dots ,t_{k}\}$ be any two disjoint subsets of $V(D)$. A set of $k$ internally disjoint paths joining source set $S$ and sink set $T$ that cover all vertices $D$ are called a one-to-many $k$-disjoint directed path cover ($k$-DDPC for short) of $D$. A digraph $D$ is semicomplete if for every pair $x,y$ of vertices of it, there is at least one arc between $x$ and $y$. 
		 
		 In this paper, we prove that every semicomplete digraph $D$ of sufficiently large order $n$ with $\delta^{0}(D) \geq \lceil (n+k-1)/2\rceil$ has a one-to-many $k$-DDPC joining any disjoint source set $S$ and sink set $T$, where $S = \{s\}, T = \{t_{1}, \dots, t_{k}\}$.
		\vspace{0.2cm}\\		 
		\textbf{Keywords:} Semicomplete digraph; Minimun semi-degree; Disjoint path cover 
		\vspace{0.2cm}\\
{\bf AMS subject classification (2020)}: 05C07, 05C20, 05C35, 05C70.
	\end{abstract}

	\section{Introduction}
	For terminology and notation not defined here, we refer to \cite{Bang-Jensen08} and \cite{Bang-Jensen18}. 
	In this paper, a path always means a directed path. An {\it $x$-$y$ path} is a directed path which is from $x$ to $y$ for two vertices $x,y \in V(D)$. For two vertices $x$ and $y$ on a path $P$ satisfying $x$ precedes $y$, let $xPy$ denote the subpath of $P$ from $x$ to $y$. A digraph $D$ is {\it strongly connected}, or {\it strong} for short, if there exists an $x$-$y$ path and a $y$-$x$ path, for every pair $x$, $y$ of distinct vertices in $D$.
	
	Let $D = (V(D), A(D))$ be a digraph, and let $D_{1}$ and $D_{2}$ be two disjoint subdigraphs of $D$ satisfying $V(D_{1}) \cup V(D_{2}) = V(D)$. For $x \in V(D_{i})$, let $N^{+}_{D_{i}}(x):= \{y \in V(D_{i})\setminus x \colon xy \in A(D)\}$. $N^{-}_{D_{i}}(x)$ is defined similarly. Let $d^{+}_{D_{i}}(x) := \lvert N^{+}_{D_{i}}(x) \rvert$, $d^{-}_{D_{i}}(x) := \lvert N^{-}_{D_{i}}(x) \rvert$ and $N_{D_{i}}(x) := N^{+}_{D_{i}}(x) \cup N^{-}_{D_{i}}(x)$. 
	
	A {\it $k$-disjoint path cover} of an undirected graph $G$ is a set of $k$ internally disjoint paths connecting given disjoint source set and sink set such that all vertices of $G$ is covered by the path set. The $k$-disjoint path cover problem ($k$-DPC for short) has been studied by many researchers, see \cite{Jo13, Jo(1)13, Lim16, Liu10, Park04, Park06, Park08, Shih11, You15}. The problem of $k$-disjoint path cover can be classified into three types according to the number of elements in the source set and the sink set: one-to-one, one-to-many and many-to-many. The one-to-one type considers disjoint path covers joining a single pair of source $s$ and sink $t$ \cite{Cao18, Shih11, You15}, and the one-to-many type is about disjoint path covers which join a single source $s$ and a set of $k$ distinct sinks $t_{1}, t_{2}, \ldots , t_{k}$ \cite{Park04, Zhou}. The many-to-many type considers disjoint path covers between a set of $k$ sources $s_{1}, s_{2}, \ldots, s_{k}$ and another set of $k$ sinks $t_{1}, t_{2}, \ldots, t_{k}$ \cite{Jo13, Jo(1)13, Liu10, Park06, Park08}.
	
	Let $k$ be a positive integer, let $S = \{s\}$ and $T = \{t_{1}, \dots ,t_{k}\}$ be two disjoint subsets of $V(D)$. A set of disjoint path $\{P_{1}, \dots, P_{k}\}$ of $D$ is a {\it $k$-disjoint directed path cover} ({\em one-to-many $k$-DDPC} for short) of $D$, if $\bigcup^{k}_{i=1}V(P_{i}) = V(D)$ and $V(P_{i}) \cap V(P_{j}) = \{s\}$ for all $i \neq j$, where $P_{i}$ is a path from $s$ to $t_{i}$.
	
	A digraph $D$ is {\it semicomplete} if for every pair $x,y$ of vertices of it, there is at least one arc between $x$ and $y$. In this paper, we study the problem of one-to-many $k$-DDPC in semicomplete digraphs and prove the following main result. Note that our argument is inspired by that of \cite{Khn08}, 
	
	\begin{theorem}\label{The:main}
		
		Let $D$ be a semicomplete digraph of order $n\geq (9k)^{5}$, where $k$ $(\geq 2)$ is an integer. If $\delta^{0}(D)\geq \lceil (n+k-1)/2\rceil$, then $D$ has a one-to-many $k$-DDPC for any disjoint source set $S$ and sink set $T$, where $S = \{s\}, T = \{t_{1}, \dots, t_{k}\}$.
		
	\end{theorem}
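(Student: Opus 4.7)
The plan is to reduce the problem to finding a Hamilton $s$-$t_1$ path in a suitably reduced subdigraph, by first peeling off short disjoint $s$-$t_j$ paths for $j = 2, \ldots, k$ and then handling the remainder via Hamiltonian-type arguments in the dense semicomplete setting.

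First, I would construct $k-1$ internally disjoint short $s$-$t_j$ paths $P_2, \ldots, P_k$ of length at most $3$, chosen greedily. For each $j$, the semi-degree hypothesis yields
\[
|N^+(s) \cap N^-(t_j)| \geq d^+(s) + d^-(t_j) - (n-2) \geq k+1,
\]
which provides several candidate intermediate vertices for a length-$2$ path $s \to u_j \to t_j$. When the candidate pool is exhausted by previously chosen intermediates or by the other sinks in $T$ (which can happen once $k$ is moderately large), I would switch to a length-$3$ path $s \to u_j \to v_j \to t_j$ with $u_j \in N^+(s)$, $v_j \in N^-(t_j)$, and $u_j v_j \in A(D)$; a straightforward double-counting argument using the semi-degree condition shows there are $\Omega(nk)$ such ordered pairs $(u_j, v_j)$, which is more than ample for $n \geq (9k)^5$. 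Let $D'$ denote the semicomplete subdigraph on $n' \geq n - 2(k-1)$ vertices obtained by deleting the internal vertices of $P_2, \ldots, P_k$; then $\delta^0(D')$ remains within $O(k)$ of $n'/2$.

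The core of the proof then becomes exhibiting a Hamilton $s$-$t_1$ path in $D'$; combined with $P_2, \ldots, P_k$ this yields the desired one-to-many $k$-DDPC. The main obstacle is that after the removals, $\delta^0(D')$ is only slightly below $n'/2$, so a direct appeal to Ghouila-Houri's theorem or its Hamilton-connected analogue does not apply out of the box. I expect the argument to split into several structural cases (consistent with the many case labels declared in the preamble), according to whether $D$ is close to a blow-up of a small extremal semicomplete digraph and according to the location of $s$ and $t_1$ within such a near-extremal structure. In the generic (non-extremal) case, a rotation-extension argument in the spirit of Kuhn-Osthus should produce the Hamilton $s$-$t_1$ path in $D'$; in the extremal cases, one exploits the precise structure together with the $k$-strong connectivity of $D$, which follows from $\delta^0(D) \geq \lceil (n+k-1)/2 \rceil$, to locate the Hamilton $s$-$t_1$ path by hand. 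The polynomial bound $n \geq (9k)^5$ is consistent with such a combinatorial, case-based approach rather than a regularity-lemma argument, and the hardest subcases will likely be those where $D$ is nearly the disjoint union of two tournaments joined by a few arcs, so that the natural rotation moves are constrained and $s$, $t_1$ may lie on the ``wrong'' sides of the partition.
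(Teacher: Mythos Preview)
Your plan differs substantially from the paper's approach and contains a concrete gap. First, a technical oversight: if you delete only the \emph{internal} vertices of $P_2,\ldots,P_k$, the sinks $t_2,\ldots,t_k$ remain in $D'$, and any Hamilton $s$--$t_1$ path in $D'$ must pass through them, violating $V(P_i)\cap V(P_j)=\{s\}$; you must delete $t_2,\ldots,t_k$ as well. More seriously, the reduction does not simplify the problem. The semi-degree hypothesis gives only that $D$ is $(k+1)$-strongly connected (a minimum separator $S$ forces $\delta^0(D)\le (n-|S|)/2-1+|S|$, whence $|S|\ge k+1$), so after removing $\Theta(k)$ vertices $D'$ need not even be strong, let alone $4$-strong; neither Thomassen's Hamilton-connectedness criterion nor any off-the-shelf result applies. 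Your proposed remedy --- ``rotation-extension in the generic case, exploit the precise extremal structure otherwise'' --- is a description of the difficulty rather than a plan for overcoming it: you give no indication of which structures arise, how the positions of $s,t_1$ interact with them, or why committing in advance to particular short $P_2,\ldots,P_k$ does not paint you into a corner in the near-extremal situations.

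The paper proceeds quite differently. By Corollary~\ref{Cor-Zhou} an $S$--$T$ path system exists; take one, $L$, of maximum total order and set $H=D-V(L)$. Lemmas~\ref{Lem:connect}--\ref{Lem:bound} show that $H$ is strong with $|H|\le\lfloor(n-k+1)/2\rfloor-1$, and that on each $P_i$ the vertices with an in-arc from $H$ all precede those with an out-arc to $H$. The case split is on $|H|$ (namely $|H|=1$; $2\le|H|\le n/2-n/(50k)$; and $|H|$ larger), not on any global extremal/non-extremal dichotomy for $D$. In each case one reroutes portions of possibly \emph{several} of the $k$ paths simultaneously through a Hamiltonian path or cycle of $H$ to produce a strictly larger $S$--$T$ system, contradicting maximality. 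The freedom to modify all $k$ paths at once is precisely what your ``freeze $P_2,\ldots,P_k$, then Hamiltonize the rest'' strategy forfeits.
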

	
	\section{Preliminaries}
	
	We now introduce two results concerning the existence of a Hamiltonian path (cycle) in semicomplete digraphs.
	
	\begin{theorem} \cite{Redei1934} \label{The:Redei}
		Every semicomplete digraph contains a Hamiltonian path.
		
	\end{theorem}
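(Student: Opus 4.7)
The plan is to split $V(D)\setminus\{s\}$ into $k$ balanced blocks, one per sink, and assemble the $i$-th path of the cover as ``$s\to a_i\to$ Hamilton path through the $i$-th block ending at $t_i$,'' where $a_i$ is a carefully chosen out-neighbor of $s$. The extremal hypothesis $\delta^{0}(D)\ge\lceil(n+k-1)/2\rceil$ is just barely enough to carry out each step, and the large-$n$ hypothesis $n\ge(9k)^{5}$ is spent on absorbing the fluctuations introduced by the split.

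Step 1 (Anchor arcs from $s$). The bound $d^{+}(s)\ge\lceil(n+k-1)/2\rceil$ dwarfs $k$ when $n\ge(9k)^{5}$, so we can pick $k$ distinct out-neighbors $a_{1},\dots,a_{k}$ of $s$ with $a_{i}\notin T$. (If the arc $st_{i}$ happens to exist, we may instead take the trivial single-arc path $P_{i}=st_{i}$ and drop $t_{i}$ from the later bookkeeping; this only helps.)

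Step 2 (Balanced random partition). Partition the remaining $n-2k$ vertices of $V(D)\setminus(\{s\}\cup T\cup\{a_{1},\dots,a_{k}\})$ uniformly at random into $k$ classes $W_{1},\dots,W_{k}$ of almost equal size and set $V_{i}:=W_{i}\cup\{a_{i},t_{i}\}$. Apply a Chernoff-type bound to each of the $2n$ semi-degree random variables $d^{\pm}_{D[V_{i}]}(v)$ and take a union bound; with $n\ge(9k)^{5}$ this yields, with positive probability, that every $D[V_{i}]$ is a semicomplete digraph of order $m_{i}\approx n/k$ with $\delta^{0}(D[V_{i}])\ge\lceil(m_{i}+1)/2\rceil$. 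The slack $(k-1)/2$ in the hypothesis is exactly what gets consumed here, so this calibration is the heart of the proof.

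Step 3 (Hamilton path with prescribed endpoints in each block). For each $i$, apply a Hamiltonian-connectedness lemma for semicomplete digraphs of the form ``semi-degree slightly above half the order forces a Hamilton path between any prescribed ordered pair'' to get a Hamilton path $Q_{i}$ of $D[V_{i}]$ from $a_{i}$ to $t_{i}$. If such a lemma is not on the shelf in precisely the semicomplete/semi-degree formulation needed, I would derive it from Theorem~\ref{The:Redei}: Rédei's theorem produces a Hamilton path in $D[V_{i}]$ with arbitrary endpoints, and the high semi-degree inside $V_{i}$ permits a standard rotation-extension argument to move those endpoints to $a_{i}$ and $t_{i}$.

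Step 4 (Assembly). Put $P_{i}:=sa_{i}Q_{i}$. The paths $P_{1},\dots,P_{k}$ are internally disjoint $s$-to-$t_{i}$ paths whose union is all of $V(D)$, so they form the desired one-to-many $k$-DDPC. The main obstacle, as flagged above, is Step 2: the surplus of $\delta^{0}(D)$ over $n/2$ is only $(k-1)/2$, so the random split must not cost more than a lower-order correction in the minimum semi-degree of any block, and it is this quantitative balancing that forces the hypothesis $n\ge(9k)^{5}$.
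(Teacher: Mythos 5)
Your proposal does not prove the statement in question. The statement is R\'edei's theorem --- \emph{every semicomplete digraph contains a Hamiltonian path} --- which the paper simply cites from the 1934 source and uses as a black box. What you have written is instead a proof sketch of the paper's \emph{main} theorem (the existence of a one-to-many $k$-DDPC under the semi-degree condition), and in Step 3 you explicitly invoke Theorem~\ref{The:Redei} as an ingredient; as a purported proof of that very theorem this is circular. R\'edei's theorem needs no degree hypothesis at all: the standard argument is a short induction on $n$ (given a Hamiltonian path $v_1\cdots v_{n-1}$ of $D-v$, either $vv_1\in A(D)$, or $v_{n-1}v\in A(D)$, or there is a consecutive pair with $v_iv,\,vv_{i+1}\in A(D)$, since the ``first index $i$ with $vv_i\in A(D)$'' cannot jump past every insertion point in a semicomplete digraph). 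None of the machinery you set up is relevant to it.

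Even read as an attempt at the main theorem, the argument has a fatal quantitative gap at Step~2. For a vertex $v$, the number of its out-neighbors landing in a random block $V_i$ of size $m_i\approx n/k$ is a hypergeometric variable with mean about $\delta^{0}(D)/k \approx n/(2k) + (k-1)/(2k)$, i.e.\ it exceeds $m_i/2$ only by a quantity smaller than $1/2$, while its standard deviation is of order $\sqrt{n/k}$. No Chernoff or union bound can preserve a sub-constant surplus against fluctuations of order $\sqrt{n}$; with high probability many vertices will have in-block semi-degree well below $\lceil (m_i+1)/2\rceil$, so the blocks need not satisfy any useful degree condition (and Step~3's Hamiltonian-connectedness lemma, which you do not prove, would then have nothing to apply to). This is why the paper does not partition the vertex set at all: it takes a maximum $S$-$T$ path system $L$ (whose existence comes from the linkedness result, Corollary~\ref{Cor-Zhou}), and derives a contradiction by absorbing the leftover semicomplete digraph $H$ into $L$ via the sets $F$, $R$, $F_m$, $R_m$ and Hamiltonian paths/cycles of $H$ supplied by Theorems~\ref{The:Redei} and~\ref{The:Camion}.
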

	
	\begin{theorem} \cite{Camion1959} \label{The:Camion}
	Every strong semicomplete digraph on $n\geq 3$ vertices has a Hamiltonian cycle.   	
	\end{theorem}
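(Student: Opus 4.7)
The plan is to take a longest cycle $C = v_1 v_2 \cdots v_k v_1$ of $D$ and argue that $V(C) = V(D)$ by contradiction. A cycle exists because $D$ is strong and $n \ge 3$; choose $C$ of maximum length and assume $W := V(D)\setminus V(C) \neq \emptyset$. The semicomplete hypothesis yields $V(C) = N^+_C(v) \cup N^-_C(v)$ for every $v \in W$, where $N^+_C(v)$ and $N^-_C(v)$ denote the out- and in-neighbors of $v$ that lie on $C$.

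First I would rule out the existence of a \emph{mixed} vertex, i.e.\ a $v \in W$ with both $N^+_C(v) \neq \emptyset$ and $N^-_C(v) \neq \emptyset$. Starting from any $v_j \in N^+_C(v)$ and traversing $C$ backwards, the semicomplete property forces us to eventually hit some $v_i \in N^-_C(v)$ whose successor $v_{i+1}$ still lies in $N^+_C(v)$; otherwise the backward traversal would show $N^-_C(v) = \emptyset$. Replacing the arc $v_i v_{i+1}$ by the detour $v_i \to v \to v_{i+1}$ then produces a cycle of length $k+1$, contradicting the maximality of $C$.

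Hence every $v \in W$ satisfies $N^-_C(v) = \emptyset$ or $N^+_C(v) = \emptyset$, and we partition $W = W^+ \cup W^-$ with $W^+ = \{v \in W : N^-_C(v)=\emptyset\}$ and $W^- = \{v \in W : N^+_C(v) = \emptyset\}$; these are disjoint since $V(C)\neq\emptyset$. Strong connectivity forces both sets to be nonempty: if $W^- = \emptyset$, then no arc leaves $V(C)$ (any arc out of $V(C)$ into $W$ could only land in $W^-$), contradicting strong connectivity; similarly $W^+ \neq \emptyset$. For any $v^+ \in W^+$ and $v^- \in W^-$ there is at least one arc between them by the semicomplete hypothesis. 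If some pair satisfies $v^- \to v^+$, then $v_1 v_2 \cdots v_k v^- v^+ v_1$ is a cycle of length $k+2$, using $v_k \to v^-$, the arc $v^- \to v^+$, and $v^+ \to v_1$, again contradicting maximality. Otherwise $v^+ \to v^-$ for every such pair, so no arc enters $W^+$ from $V(C)$ (by definition of $W^+$) or from $W^-$; this isolates $W^+$ from $V(D)\setminus W^+\neq\emptyset$ and contradicts strong connectivity.

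The main technical obstacle is the insertion step for a mixed vertex, complicated by the fact that a vertex $v_i \in V(C)$ might simultaneously belong to $N^+_C(v)$ and $N^-_C(v)$ (digon case); a careful cyclic traversal of $C$ handles this uniformly, since we only need an $i$ with $v_i \in N^-_C(v)$ and $v_{i+1} \in N^+_C(v)$, regardless of overlap. The remaining steps are bookkeeping on how the partition $V(D) = V(C) \cup W^+ \cup W^-$ interacts with arc directions under the semicomplete and strong hypotheses.
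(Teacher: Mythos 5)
Your argument is correct. Note, however, that the paper does not prove this statement at all: it is quoted as a classical result of Camion (1959) and used as a black box, so there is no proof in the paper to compare against. What you have written is the standard proof of Camion's theorem: take a longest cycle $C$, use semicompleteness to show that any vertex outside $C$ with both an in-neighbour and an out-neighbour on $C$ can be inserted between two consecutive vertices of $C$ (your cyclic traversal correctly handles the digon case, since only the existence of some $i$ with $v_i\in N^-_C(v)$ and $v_{i+1}\in N^+_C(v)$ is needed), and then use strong connectivity to derive a contradiction from the resulting partition $W=W^+\cup W^-$. All the small points check out: $W^+$ and $W^-$ are disjoint because $N^+_C(v)\cup N^-_C(v)=V(C)\neq\emptyset$; both are nonempty because a path from $C$ into $W$ must first enter $W^-$ and a path from $W$ back to $C$ must leave from $W^+$; an arc $v^-\to v^+$ yields the longer cycle $v_1\cdots v_kv^-v^+v_1$; and in the remaining case $W^+$ receives no arc from its nonempty complement, contradicting strongness. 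This is a complete and self-contained proof of the cited theorem.
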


	The following is the definition of {\it $H$-subdivision}.
		\begin{definition} \cite{Ferrara06} \label{Def:division}
		Let $H$ be a (multi)digraph and $D$ be a digraph. Let $\mathcal{P}(D)$ denote the set of paths in $D$. An {\em $H$-subdivision} in $D$ is a pair of mappings $f \colon V(H) \rightarrow V(D)$ and $g \colon A(H) \rightarrow \mathcal{P}(D)$ such that:
		
		(a) $f(u) \neq f(v)$ for all disjoint $u, v\in V(H)$ and
		
		(b) for every $uv \in A(H)$, $g(uv)$ is an $f(u)$-$f(v)$ path in $D$, and disjoint arcs map into internally disjoint paths in $D$.
		
	\end{definition}
	
	A digraph $D$ is {\it $H$-linked} if every injective mapping $f \colon V(H) \rightarrow V(D)$ can be extended to an $H$-subdivision in $D$. A digraph $D$ is {\em one-to-many $k$-linked} if it has a set of $k$  paths $\{P_{1}, \ldots, P_{k}\}$ for any disjoint vertex subsets $S=\{s\}$ and $T=\{t_{1},\dots,t_{k}\}$ such that every path $P_{i}$ is from $s$ to $t_{i}$ and $V(P_{i})\cap V(P_{j}) = \{s\}$ for $i\neq j$. 
	
	Ferrara, Jacobson and Pfender \cite{Ferrara13} gave a sufficient condition for a digraph $D$ to be $H$-linked. By this result, Zhou~\cite{Zhou} found a specific digraph $H$ and got the following corollary.
	
	
	\begin{corollary} \cite{Zhou} \label{Cor-Zhou}
		Let $k$ $(\geq 2)$ be an integer, and let $D$ be a digraph with order $n \geq 80k$. If  $\delta^{0}(D)\geq \lceil (n + k - 1)/2\rceil$, then $D$ is one-to-many $k$-linked.
	\end{corollary}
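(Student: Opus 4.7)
The plan is to use Corollary~\ref{Cor-Zhou} to produce a skeleton of $k$ internally disjoint $s$-$t_{i}$ paths and then swallow every remaining vertex into this skeleton by an insertion/rerouting argument tailored to semicomplete digraphs. Since $n\geq(9k)^{5}\geq 80k$ and $\delta^{0}(D)\geq\lceil(n+k-1)/2\rceil$, Corollary~\ref{Cor-Zhou} applies and $D$ is one-to-many $k$-linked. Among all such systems $\mathcal{P}=\{P_{1},\dots,P_{k}\}$ of internally disjoint $s$-$t_{i}$ paths in $D$, I would pick one whose vertex set $V(\mathcal{P}):=\bigcup_{i}V(P_{i})$ is as large as possible; equivalently, the leftover set $U:=V(D)\setminus V(\mathcal{P})$ is minimum. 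If $U=\emptyset$ then $\mathcal{P}$ is already a one-to-many $k$-DDPC, so assume $U\neq\emptyset$ and fix $u\in U$.

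For each path $P_{i}=s\,x_{1}x_{2}\cdots x_{m-1}\,t_{i}$, a \emph{simple insertion} of $u$ at position $j$ is the replacement of the arc $x_{j}x_{j+1}$ by the two arcs $x_{j}u$ and $ux_{j+1}$, producing a larger $s$-$t_{i}$ path still internally disjoint from the other $P_{\ell}$'s and hence contradicting the maximality of $|V(\mathcal{P})|$. So no simple insertion of any $u\in U$ into any $P_{i}$ can exist, which by a standard sign-switching argument for semicomplete digraphs forces on each $P_{i}$ the following rigid bad pattern relative to $u$: the out-neighbours $N^{+}(u)\cap V(P_{i})$ form a prefix of $P_{i}$, the in-neighbours $N^{-}(u)\cap V(P_{i})$ form a suffix, they meet at a single switch-arc $x_{\ell_{i}}x_{\ell_{i}+1}$ with $u\to x_{\ell_{i}}$ and $x_{\ell_{i}+1}\to u$, and no vertex of $V(P_{i})$ has a double arc with $u$.

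Combining the semi-degree bound $|N^{\pm}(u)\cap V(\mathcal{P})|\geq\lceil(n+k-1)/2\rceil-(|U|-1)$ with the identity $\sum_{i}|V(P_{i})|=|V(\mathcal{P})|+k-1$, averaging over $i$ locates at least one $P_{i}$ that is very long and has both a long prefix and a long suffix with respect to $u$. Its switch-arc $x_{\ell_{i}}\to x_{\ell_{i}+1}$ together with the arcs $u\to x_{\ell_{i}}$ and $x_{\ell_{i}+1}\to u$ forms a 2-cycle through $u$, and the degree condition further gives $|N^{-}(x_{\ell_{i}+1})\cap N^{+}(x_{\ell_{i}})|\geq 2\delta^{0}(D)-n\geq k-1$ common neighbours available as detour vertices. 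Using this cycle and these candidates (taken from $U$ or swapped back into another $P_{j}$), I would build a local reroute of $P_{i}$ that absorbs $u$ and yields a system $\mathcal{P}'$ with $|V(\mathcal{P}')|>|V(\mathcal{P})|$, contradicting the minimality of $|U|$. The main obstacle is executing this reroute so that the $k$ resulting paths remain internally disjoint and retain their endpoints $s$ and $t_i$ in every configuration of the bad pattern; here the generous bound $n\geq(9k)^{5}$ is what supplies the counting slack needed to handle all cases uniformly.
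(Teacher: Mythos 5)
Your proposal does not prove the stated corollary; it is circular with respect to it. The statement to be established is Corollary~\ref{Cor-Zhou} itself --- that the semi-degree bound forces an arbitrary digraph of order $n\geq 80k$ to be one-to-many $k$-linked --- yet the very first step of your argument is ``Corollary~\ref{Cor-Zhou} applies and $D$ is one-to-many $k$-linked.'' You have assumed exactly what you were asked to prove, and everything after that opening sentence is devoted to a different theorem. In fact, what you sketch is essentially the paper's proof of its main result, Theorem~\ref{The:main}: take a maximum system of internally disjoint $s$-$t_{i}$ paths, suppose some vertex is uncovered, and absorb it by an insertion/rerouting argument. That argument has no bearing on the corollary, and it could not be repaired into one, because your machinery uses hypotheses the corollary does not grant: you repeatedly invoke semicompleteness (the ``sign-switching'' pattern for $N^{+}(u)$ and $N^{-}(u)$ along a path presupposes an arc between $u$ and every path vertex) and the much larger bound $n\geq(9k)^{5}$, whereas the corollary is asserted for arbitrary digraphs with $n\geq 80k$. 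You also prove a stronger conclusion (a spanning path cover) than the corollary claims (mere linkedness), which is further evidence that you are proving the wrong statement.

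The paper itself gives no internal proof of Corollary~\ref{Cor-Zhou}: it is quoted from \cite{Zhou}, where it is obtained by applying the degree condition of Ferrara, Jacobson and Pfender \cite{Ferrara13} for $H$-linked digraphs to the specific (multi)digraph $H$ consisting of a single source vertex with $k$ arcs to $k$ distinct sink vertices; an $H$-subdivision rooted at $s,t_{1},\dots,t_{k}$ is then precisely a one-to-many $k$-linkage. A correct blind proof would either reproduce that reduction to the $H$-linked theorem or give a direct linkage argument valid for general digraphs under the stated degree bound; your proposal does neither.
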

	
	\section{Proof of Theorem 1}
	In the rest of this paper, let $D$ be a semicomplete digraph which satisfies the assumption of Theorem~\ref{The:main}. Let $S = \{s\}$ and $T = \{t_{1},\ldots ,t_{k}\}$ be any two disjoint subsets of $D$. An {\em $S$-$T$ path} in $D$ is a set of $k$-disjoint paths $P_{1},\dots ,P_{k}$, where every path $P_{i}$ is from $s$ to $t_{i}$ and $V(P_{i})\cap V(P_{j}) = \{s\}$ for all $i \neq j$. By Corollary~\ref{Cor-Zhou}, $D$ contains at least one $S$-$T$ path. Let $L$ be a maximum $S$-$T$ path, that is, it covers the most vertices of $D$ among all $S$-$T$ paths. Suppose that $L$ is not a one-to-many $k$-DDPC (this implies that $\bigcup^{k}_{i=1}V(P_{i}) \subsetneqq V(D)$). We aim to obtain a larger $S$-$T$ path from $L$ and hence this produces a contradiction. 
	
	Let $H$ be the subdigraph of $D$ induced by $V(D) \setminus V(L)$. Since $D$ is semicomplete, both $L$ and $H$ are semicomplete. Let $F := \{x \in V(L) \colon \exists  y \in V(H) \ \mathrm{such} \ \mathrm{that} \ yx \in A(D)\}$, $R := \{x \in V(L) \colon \exists  y \in V(H) \ \mathrm{such} \ \mathrm{that} \ xy \in A(D)\}$. Let $F_{m} := F \setminus R$ and $R_{m} := R \setminus F$. Since $D$ is semicomplete, for every vertex $h \in V(H)$ and every vertex $x \in R_{m}$, there is an arc from $x$ to $h$. Similarly, for every vertex $y \in F_{m}$, there is an arc from $h$ to $y$. For a vertex $v \in V(P_{i}) \setminus \{s\}$, its predecessor is denoted by $v^{-} \in V(P_{i}) \setminus \{t_{i}\}$. Similarly, for a vertex $v \in V(P_{i}) \setminus \{t_{i}\}$, its successor is denoted by $v^{+} \in V(P_{i}) \setminus \{s\}$. Let $F^{-} := \{x^{-} \colon x \in F \setminus S\}$ and $ R^{+} := \{ x^{+} \colon x \in R \setminus T\}$. Similarly, let $F^{-}_{m} := \{x^{-} \colon x \in F_{m} \setminus S\}$ and $R^{+}_{m} := \{x^{+} \colon x \in R_{m} \setminus T\}$. To prove Theorem 1, we need more preliminary results, including several lemmas and corollaries.
	
	\begin{lemma}\label{Lem:connect}
		The following assertions hold:
\begin{description}
\item[(a) ] $d^{-}_{H}(x) + d^{+}_{H}(y) \geq \lvert H \rvert - 1$ for every pair of vertices $x,y \in V(H)$. 
\item[(b) ] $H$ is strong with $\lvert H \rvert \leq \lfloor \frac{n-k+1}{2}\rfloor - 1$.
\end{description}
	\end{lemma}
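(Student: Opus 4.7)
The plan is to prove (a) by combining the minimum semi-degree hypothesis with the maximality of $L$, and to derive (b) from (a) and Camion's Theorem applied to $H$. For part (a) the case $x = y$ is immediate from semicompleteness of $H$, so assume $x \neq y$ and start from
\[
d^{-}_{H}(x) + d^{+}_{H}(y) = d^{-}_{D}(x) + d^{+}_{D}(y) - |N^{-}_{L}(x)| - |N^{+}_{L}(y)|,
\]
where $d^{-}_{D}(x) + d^{+}_{D}(y) \ge 2\delta^{0}(D) \ge n + k - 1$. It therefore suffices to show $|N^{-}_{L}(x)| + |N^{+}_{L}(y)| \le |V(L)| + k$. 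I would split on whether $H$ contains an $x$-$y$ path $Q$ of length at most $2$ (that is, either $xy \in A(D)$ or some $z \in V(H) \setminus \{x, y\}$ satisfies $xz, zy \in A(D)$). If such a $Q$ exists, then for any consecutive pair $(v, v^{+})$ on some $P_{i}$ with $v \in N^{-}_{L}(x)$ and $v^{+} \in N^{+}_{L}(y)$ one could replace $vv^{+}$ by $v \to x \to Q \to y \to v^{+}$ and enlarge $L$, contradicting its maximality; summing the absence of such pairs over the $|V(L)| - 1$ consecutive edges of $L$ (and accounting for the fact that $s$ is first in $k$ pairs but never second) yields $|N^{-}_{L}(x)| + |N^{+}_{L}(y)| \le |V(L)| + k$. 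Otherwise $xy \notin A(D)$, so $yx \in A(D)$ by semicompleteness, and every $z \in V(H) \setminus \{x, y\}$ must satisfy $zx \in A(D)$ or $yz \in A(D)$; a direct count inside $H$ then already gives $d^{-}_{H}(x) + d^{+}_{H}(y) \ge |H|$.

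For the first claim of (b), strong connectivity of $H$ is a direct consequence of (a). If $H$ were not strong, its strong components would admit a linear order $C_{1}, \ldots, C_{r}$ (with $r \ge 2$) in which every inter-component arc goes from $C_{i}$ to $C_{j}$ for $i < j$; picking $x \in C_{1}$ and $y \in C_{r}$ would force $d^{-}_{H}(x) \le |C_{1}| - 1$ and $d^{+}_{H}(y) \le |C_{r}| - 1$, hence $d^{-}_{H}(x) + d^{+}_{H}(y) \le |H| - 2$, contradicting (a).

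For the size bound $|H| \le \lfloor (n - k + 1)/2 \rfloor - 1$, I exploit that $H$ is now strong and semicomplete, so by Theorem~\ref{The:Camion} it admits a Hamilton cycle $C_{H} = h_{1} \to h_{2} \to \cdots \to h_{|H|} \to h_{1}$. The key insertion lemma reads: if a consecutive pair $(u, u^{+})$ on some $P_{i}$ satisfies $|N^{+}_{H}(u)| + |N^{-}_{H}(u^{+})| > |H|$, then by pigeonhole modulo $|H|$ there is an index $i$ with $uh_{i}, h_{i-1}u^{+} \in A(D)$, so one may replace $uu^{+}$ by $u \to h_{i} \to h_{i+1} \to \cdots \to h_{i-1} \to u^{+}$ and enlarge $L$ by $|H|$ vertices, contradicting maximality. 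Summing the resulting inequality $|N^{+}_{H}(u)| + |N^{-}_{H}(u^{+})| \le |H|$ over all consecutive pairs of $L$, and combining this with the semi-degree bound together with the semicomplete identity $|N^{+}_{H}(v)| + |N^{-}_{H}(v)| \ge |H|$ valid for every $v \in V(L)$, will yield the claimed bound. The hard part is exactly this last step: a naive averaging loses too much near $s$ (which lies on all $k$ paths) and near the $t_{j}$, so one likely also needs the prefix/suffix closure of $F_{m}$ and $R_{m}$ that the same insertion argument forces -- concretely, if $u \in R_{m} \setminus T$ then $u^{+} \in R_{m}$ as well, because $u$ already dominates all of $H$ and an $F$-successor would permit an extension.
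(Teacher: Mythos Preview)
Your argument for part (a) is correct and in fact tidier than the paper's: the paper bootstraps by first proving the inequality only for pairs $x,y$ with $xy\in A(D)$, then uses this to establish strong connectivity, and only then returns to prove (a) for arbitrary pairs via an $x$-$y$ path in $H$. Your direct dichotomy --- either a short $x$-$y$ path exists in $H$ (and the insertion/counting works as in the paper), or else semicompleteness forces $d^-_H(x)+d^+_H(y)\ge |H|$ outright --- avoids that detour. Your strong-connectivity argument via the acyclic ordering of strong components is also fine.

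The gap is in the size bound. The Hamilton-cycle insertion lemma and the plan to sum $|N^+_H(u)|+|N^-_H(u^+)|\le |H|$ over all consecutive pairs does not close: carrying out the sum yields only
\[
k\,|N^+_H(s)|+\sum_{i=1}^{k}|N^-_H(t_i)|\le k\,|H|,
\]
which says nothing about $|H|$ versus $n$. You sensed this (``a naive averaging loses too much''), but the remedy you gesture at --- the $R_m$-closure --- is itself circular here, since $R_m\neq\emptyset$ is only guaranteed \emph{after} the size bound (that is how Lemma~\ref{Lem:bound} is proved).

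The fix is much simpler and is what the paper does: forget the Hamilton cycle and use strongness of $H$ directly. If $u\in R$ and $u^+\in F$, pick $h_1\in N^+_H(u)$ and $h_2\in N^-_H(u^+)$; strongness gives an $h_1$--$h_2$ path $P$ in $H$, and replacing $uu^+$ by $uPu^+$ enlarges $L$. Hence $R^+\cap F=\emptyset$. Now if the bound failed, i.e.\ $|L|\le\lceil(n+k-1)/2\rceil\le\delta^0(D)$, then every $v\in V(L)$ would have $d^-_L(v)\le|L|-1<\delta^0(D)$ and $d^+_L(v)<\delta^0(D)$, forcing $v\in F\cap R$; in particular $R\setminus T\neq\emptyset$ (since $|L|\ge k+1$), so some $u\in R$ has a successor $u^+\in F$, contradicting $R^+\cap F=\emptyset$. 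Equivalently, $R^+\cap F=\emptyset$ produces a vertex $v\in V(L)\setminus F$ with $N^-_D(v)\subseteq V(L)$, whence $|L|-1\ge\delta^0(D)$ and the bound follows.
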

	\begin{proof}
		For any vertex $x \in V(H)$, we have $d^{-}_{H}(x) + d^{+}_{H}(x) \geq \lvert H \rvert - 1$ since $H$ is semicomplete. The case that $|H|=1$ is trivial, so now we assume that $|H|\geq 2$. 
		
		We first consider the case that $\lvert H \rvert = 2$, say $V(H) = \{x, y\}$. There is at least one arc, say $xy\in A(H)$, between $x$ and $y$ since $H$ is semicomplete. We claim that $d^{-}_{L}(x) + d^{+}_{L}(y) \leq \lvert L \rvert + k$. Suppose that $d^{-}_{L}(x) + d^{+}_{L}(y) \geq \lvert L \rvert + k + 1$. Let $X = N^{-}_{L}(x)$ and $Y = \{ v \colon v^{+} \in N^{+}_{L}(y)\}$. If $s \in N^{+}_{L}(y)$, then $s^{-}$ does not exist. For each path $P_{i}$, if $s^{+} \in N^{+}_{L}(y)$, then there is only one $s$ satisfying $s \in Y$. Therefore, we have $\lvert Y \rvert \geq d^{+}_{L}(y) - k$ and
		$$\lvert X \cap Y \rvert = \lvert X \rvert + \lvert Y \rvert - \lvert X \cup Y \rvert \geq d^{-}_{L}(x) + d^{+}_{L}(y) - k - \lvert L \rvert \geq \lvert L \rvert + k + 1 - k - \lvert L \rvert = 1.$$
		This implies that there are two vertices $v \in N^{-}_{L}(x)$ and $v^{+} \in N^{+}_{L}(y)$. By replacing the arc $vv^{+}$ with the path $vxyv^{+}$, we obtain a larger $S$-$T$ path, a contradiction. Therefore, $d^{-}_{L}(x) + d^{+}_{L}(y) \leq \lvert L \rvert + k$ and 
		$$d^{-}_{H}(x) + d^{+}_{H}(y) \geq 2\lceil \frac{n+k-1}{2} \rceil - (\lvert L \rvert + k) \geq \lvert H \rvert - 1 = 1,$$ which satisfies the degree condition in Lemma \ref{Lem:connect}. Hence, $yx\in A(H)$ and so $H$ is strong in this case. 
		
		When $\lvert H \rvert \geq 3$, from the argument above, any two vertices $x_{1}, y_{1} \in V(H)$ with $x_{1}y_{1} \in A(D)$ satisfy the degree condition in Lemma~\ref{Lem:connect}. If $y_{1}x_{1} \notin A(H)$, then $y_{1} \notin N^{-}_{H}(x_{1})$ and $x_{1} \notin N^{+}_{H}(y_{1})$. Consequently, $\lvert N^{-}_{H}(x_{1}) \cup N^{+}_{H}(y_{1}) \rvert \leq \lvert H \rvert - 2$, and so $\lvert N^{-}_{H}(x_{1}) \cap N^{+}_{H}(y_{1}) \rvert = \lvert N^{-}_{H}(x_{1}) \rvert + \lvert N^{+}_{H}(y_{1}) \rvert - \lvert N^{-}_{H}(x_{1}) \cup N^{+}_{H}(y_{1}) \rvert \geq d^{-}_{H}(x_{1}) + d^{+}_{H}(y_{1}) - (\lvert H \rvert - 2) \geq 1$. This implies that $H$ contains a $y_{1}$-$x_{1}$ path of length 2. Therefore, when $x_{1}y_{1}\in A(H)$, either $y_{1}x_{1} \in A(H)$ or there is a $y_{1}$-$x_{1}$ path of length 2 in $H$. For any vertex $x \in V(H)$, we have $\lvert N_{H}(x) \rvert = \lvert N^{-}_{H}(x) \cup N^{+}_{H}(x) \rvert = \lvert H \rvert - 1$ since $H$ is semicomplete. According to the argument above, there are two paths of both directions between $x$ and every vertex in $N_{H}(x)$. For any two vertices $x, z \in V(H)$, note that $z \in N_{H}(x)$ since $H$ is  semicomplete, and this implies that there are two paths of both directions between $x$ and $z$. Hence, $H$ is also strong in this case.
		
		For every pair of vertices $x,y \in V(H)$, there is an $x$-$y$ path, say $P$, since $H$ is strong. With a similar argument to that of the case $|H|=2$, we can get $d^{-}_{L}(x) + d^{+}_{L}(y) \leq \lvert L \rvert + k$, and for every pair of vertices $x,y \in V(H)$,
		\begin{equation}
			d^{-}_{H}(x) + d^{+}_{H}(y) \geq \lvert H \rvert - 1. \label{equ:degree}
		\end{equation}
		In particular, when $V(H) = \{h\}$, $d^{-}_{H}(h) + d^{+}_{H}(h) = \lvert H \rvert - 1 = 0$ (satisfying the above degree condition).
		
We now prove the upper bound for $\lvert H \rvert$. Note that $R^{+} \cap F = \emptyset$. If $v\in R^{+} \cap F$, then $v^{-} \in R$. There are two vertices $u_{1}, u_{2} \in V(H)$ such that $v^{-}u_{1}, u_{2}v \in A(D)$. Since $H$ is strong, $H$ contains a $u_{1}$-$u_{2}$ path $P$. By replacing the arc $v^{-}v$ with the path $v^{-}Pv$, we obtain a larger $S$-$T$ path, a contradiction. This implies that there is a vertex $v \in V(L)$ such that $N^{+}_{D}(v) \subseteq L$ or $N^{-}_{D}(v) \subseteq L$. Thus $\lvert L \rvert - 1 \geq \delta^{0}(D) \geq \lceil(n+k-1)/2\rceil$,  and then $\lvert H \rvert \leq \lfloor(n-k+1)/2\rfloor -1 $. \qedhere
		
	\end{proof}
	
	According to the proof above, we have $R^{+} \cap F = \emptyset$. Similarly, $R \cap F^{-} =\emptyset$, so the following corollary holds.
	
	\begin{corollary}\label{Cor:RF}
		For any $u \in R$, $u^{+} \notin F$. Similarly, for any $v \in F$, $v^{-} \notin R$.
	\end{corollary}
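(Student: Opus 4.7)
The two assertions of the corollary are in fact two formulations of a single fact, namely $R^{+} \cap F = \emptyset$ (equivalently $F^{-} \cap R = \emptyset$): if $u \in R$ has $u^{+} \in F$, then setting $v := u^{+}$ gives $v \in F$ with $v^{-} = u \in R$, and conversely. Moreover, the identity $R^{+} \cap F = \emptyset$ was already established mid-proof of Lemma~\ref{Lem:connect}(b). Hence my plan is simply to isolate that sub-argument and record it as a stand-alone justification here.

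More concretely, I would argue by contradiction. Suppose some $u \in R$ satisfies $u^{+} \in F$ (in particular $u \notin T$, otherwise $u^{+}$ would not exist). By the definitions of $R$ and $F$, we may pick $y_{1}, y_{2} \in V(H)$ with $u y_{1} \in A(D)$ and $y_{2} u^{+} \in A(D)$. Since $H$ is strong by Lemma~\ref{Lem:connect}(b), $H$ contains a directed $y_{1}$-$y_{2}$ path $P$, degenerating to the single vertex $y_{1}$ when $y_{1} = y_{2}$. Let $P_{i}$ be the path of $L$ containing the arc $u u^{+}$, and replace this arc by the detour $u y_{1} P y_{2} u^{+}$ to obtain a new $s$-$t_{i}$ path. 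Combined with the unchanged $P_{j}$ ($j \ne i$), this yields an $S$-$T$ path covering every vertex of $L$ plus at least $|V(P)| \ge 1$ additional vertices of $V(H)$, contradicting the maximality of $L$. The symmetric assertion $v \in F \Rightarrow v^{-} \notin R$ follows identically (or at once from the equivalence noted above).

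I do not anticipate any real obstacle here; the only bookkeeping is to note that the inserted detour $y_{1} P y_{2}$ lies entirely in $V(H)$, so the new $P_{i}$ stays internally disjoint both from $V(L)\setminus V(P_{i})$ and from the unchanged paths $P_{j}$, and that $u^{+}$ actually exists on the same $P_{i}$, which is guaranteed by $u^{+} \in F \subseteq V(L)$ together with $u \notin T$. Thus the corollary is essentially a re-packaging of material already produced in the proof of Lemma~\ref{Lem:connect}(b).
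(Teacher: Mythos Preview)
Your proposal is correct and matches the paper's approach exactly: the paper simply notes that $R^{+}\cap F=\emptyset$ (and, similarly, $R\cap F^{-}=\emptyset$) was obtained inside the proof of Lemma~\ref{Lem:connect} by the same ``replace $v^{-}v$ by a detour through a $y_{1}$-$y_{2}$ path in the strong digraph $H$'' argument you reproduce. Your added bookkeeping (the $y_{1}=y_{2}$ degeneracy, disjointness of the detour, existence of $u^{+}$ on the same $P_{i}$) is sound and only makes the write-up cleaner.
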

	
	\begin{lemma}\label{Lem:FpR}
	The following assertions hold:
\begin{description}
\item[(a) ]  $\lvert F \cup R \rvert = \lvert L \rvert$ and $\lvert F \cap R \rvert \leq k$.
\item[(b) ] For any path $P_{i}$, if $x_{1}$ precedes $x_{2}$ in $P_{i}$, then there are no distinct vertices $y_{1}, y_{2} \in V(H)$ such that $x_{1}y_{1}, y_{2}x_{2} \in A(D)$.
\end{description}
	\end{lemma}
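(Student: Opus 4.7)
The plan is to derive the equality $\lvert F\cup R\rvert=\lvert L\rvert$ directly from semicompleteness, and then deduce both the bound $\lvert F\cap R\rvert\leq k$ in (a) and the non-existence claim in (b) by iterating Corollary \ref{Cor:RF} forward along a path $P_{i}$. For the equality, since $L$ is assumed not to be a one-to-many $k$-DDPC, $H$ is nonempty; fixing any $h\in V(H)$, semicompleteness of $D$ forces, for each $x\in V(L)$, at least one of the arcs $xh$ or $hx$ to be present, placing $x$ in $R$ or $F$ respectively. Hence $V(L)\subseteq F\cup R$, and the reverse inclusion is immediate from the definitions.

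For $\lvert F\cap R\rvert\leq k$, I would argue by contradiction: if $\lvert F\cap R\rvert\geq k+1$, pigeonhole produces a path $P_{i}$ containing two vertices $v,w\in F\cap R$ with $v$ preceding $w$ on $P_{i}$. Since $w$ lies strictly after $v$, the successor $v^{+}$ exists on $P_{i}$, and Corollary \ref{Cor:RF} applied to $v\in R$ gives $v^{+}\notin F$. Combined with $V(L)=F\cup R$, this forces $v^{+}\in R\setminus F$, and the argument can be re-applied at $v^{+}$. Iterating forward along $P_{i}$ from $v$ up to $w$, every vertex strictly after $v$ — and in particular $w$ itself — must lie in $R\setminus F$, contradicting $w\in F$.

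The same iteration handles (b). From $x_{1}y_{1}\in A(D)$ and $y_{2}x_{2}\in A(D)$ with $y_{1},y_{2}\in V(H)$, we have $x_{1}\in R$ and $x_{2}\in F$; since $x_{1}$ precedes $x_{2}$ on $P_{i}$, the successor $x_{1}^{+}$ exists, and iterating Corollary \ref{Cor:RF} along $P_{i}$ from $x_{1}$ to $x_{2}$ forces $x_{2}\in R\setminus F$, contradicting $x_{2}\in F$. Note that this argument does not actually use $y_{1}\neq y_{2}$; the hypothesis is presumably stated in this form because that is how (b) will be invoked later.

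The main obstacle, I expect, is recognizing that Corollary \ref{Cor:RF}, which is by itself a local condition on consecutive vertices, propagates along $P_{i}$ once we know $V(L)=F\cup R$. A more naive attempt at (b) — substituting a $y_{1}$-$y_{2}$ path in the strong digraph $H$ for the subpath $x_{1}P_{i}x_{2}$ and then appealing to the maximality of $L$ — only succeeds when $x_{2}=x_{1}^{+}$, since for $x_{2}$ further along $P_{i}$ the substitution discards the internal vertices of $x_{1}P_{i}x_{2}$ and need not produce a larger $S$-$T$ path.
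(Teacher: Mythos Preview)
Your proof is correct and uses the same core idea as the paper: iterate Corollary~\ref{Cor:RF} forward along $P_i$, using $V(L)=F\cup R$ to force each successor into $R\setminus F$. The only difference is in how the contradiction is closed. The paper stops the iteration at $w^{-}$ (respectively $x_2^{-}$), notes that $w^{-}\in R$ and $w\in F$, and then explicitly constructs a larger $S$-$T$ path by inserting a Hamiltonian path of $H$ between $w^{-}$ and $w$ (invoking the strongness of $H$ from Lemma~\ref{Lem:connect}). You instead carry the iteration one more step to $w$ itself, obtaining $w\in R\setminus F$ directly contradicting $w\in F$, which is cleaner and avoids the auxiliary construction. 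Your observation that (b) does not actually require $y_1\neq y_2$ is also correct.
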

	
	\begin{proof}
		For any vertex $x\in V(L)$ and any vertex $y \in V(H)$, there is at least one arc between $x$ and $y$ since $H$ is semicomplete. Observe that $x \in F$, or $x \in R$ (or both) for any vertex $x \in L$, so $\lvert F \cup R \rvert = \lvert L \rvert$.
		
		For each path $P_{i}$, we claim that $\lvert (R \cap F) \cap V(P_{i}) \rvert \leq 1$. Suppose that $u, v \in R \cap F$ are two distinct vertices in $P_{i}$ and $u$ precedes $v$. Let $Q_{1}$ denote the set of all vertices between $u$ and $v$. For any path $P_{i}$, if $x \notin F$, then $x \in R$ since $\lvert F \cup R \rvert = \lvert L \rvert$. By Corollary~\ref{Cor:RF}, $u^{+} \notin F$, for any $u \in R$. Therefore, $u^{+} \in R$ and $V(u^{+}P_{i}t_{i}) \subseteq R$. As $v^{-}\in V(u^{+}P_{i}t_{i})$, we get that $v^{-}\in R$. By the assumption, $v\in F$, so there are two vertices $h_{1}, h_{2} \in V(H)$ such that $v^{-}h_{1}, h_{2}v \in A(D)$. By Lemma~\ref{Lem:connect}, $H$ is strong and hence contains an $h_{1}$-$h_{2}$ path $P$. By replacing $v^{-}v$ with $v^{-}Pv$, we obtain a larger $S$-$T$ path, a contradiction. Particularly, when $Q_{1} = \emptyset$, $u \in R$ and $v = u^{+} \in F$ by the assumption. Similar to the argument above, we still can find a larger $S$-$T$ path, this also produces a contradiction. Consequently, $|(R \cap F) \cap V(P_{i})| \leq 1$ for each path $P_{i}$, which implies that $\lvert R \cap F \rvert \leq k$.
		
		Now we prove the assertion \textbf(b). Assume that such $y_{1}, y_{2}$ exist. Therefore, $x_{1} \in R$ and $x_{2} \in F$ in $P_{i}$. Similar to the proof above, let $Q_{2}$ denote the set of all vertices between $x_{1}$ and $x_{2}$. For any path $P_{i}$, if $x \notin F$, then $x \in R$ since $\lvert F \cup R \rvert = \lvert L \rvert$. By Corollary~\ref{Cor:RF}, $u^{+} \notin F$, for any $u \in R$. Therefore, $x^{+}_{1} \in R$, $V(x^{+}_{1}P_{i}t_{i}) \subseteq R$ and $x^{-}_{2}\in R$. Together with the fact $x_{2}\in F$, we can find a larger $S$-$T$ path, a contradiction. Particularly, if $Q_{2}=\emptyset$, then $x_{1} \in R$ and $x_{2} = x^{+}_{1} \in F$ by the assumption. Similar to the argument above, we still find a larger $S$-$T$ path, and hence this also produces a contradiction. Consequently, for any path $P_{i}$, if $x_{1}$ precedes $x_{2}$ in $P_{i}$, then there are no distinct vertices $y_{1}, y_{2} \in H$ such that $x_{1}y_{1}, y_{2}x_{2} \in A(D)$. This implies $x_{2}$ always precedes $x_{1}$ for any
		$x_{1} \in R, x_{2} \in F$ in each path $P_{i}$. \qedhere
	\end{proof}
	
	The proof of the following lemma is similar to that of Lemma 13 in \cite{{Khn08}}, but we still need to give the proof below.
	
	\begin{lemma}\label{Lem:bound}
		$\lvert F \rvert$, $\lvert R \rvert \geq (n + k + 1)  /2 -\lvert H \rvert$. Furthermore, $\lvert F_{m} \rvert$, $\lvert R_{m} \rvert \geq (n - k + 1)/2 -\lvert H \rvert$ and $\vert F_{m} \cup R_{m}\rvert \geq \lvert L \rvert - k$.
	\end{lemma}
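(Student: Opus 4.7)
The plan is to exploit the degree condition applied to any single vertex of $H$ to extract a lower bound on $|R|$ (and symmetrically $|F|$), then use the already-proved bound $|F \cap R| \leq k$ from Lemma~\ref{Lem:FpR}(a) to transfer this to $F_m$, $R_m$ and to $F_m \cup R_m$. The key observation is that since $L$ is not a one-to-many $k$-DDPC, $H$ is nonempty, so we may actually pick such a vertex.

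First I would pick an arbitrary $h \in V(H)$. By hypothesis $d^{-}_{D}(h) \geq \lceil (n+k-1)/2 \rceil$. Of the in-neighbors of $h$ in $D$, at most $|H| - 1$ can lie in $V(H)$; the remaining in-neighbors all lie in $V(L)$, and by the definition of $R$ each one belongs to $R$. This yields
\[
|R| \;\geq\; \left\lceil \tfrac{n+k-1}{2}\right\rceil - (|H|-1) \;\geq\; \tfrac{n+k+1}{2} - |H|.
\]
Applying the same argument to $d^{+}_{D}(h)$ gives the corresponding lower bound $|F| \geq (n+k+1)/2 - |H|$.

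Next, since $F_m = F \setminus R$ and $R_m = R \setminus F$, we have $|F_m| \geq |F| - |F \cap R|$ and $|R_m| \geq |R| - |F \cap R|$. Combining the bounds just obtained with Lemma~\ref{Lem:FpR}(a), which gives $|F \cap R| \leq k$, yields
\[
|F_m|,\ |R_m| \;\geq\; \tfrac{n+k+1}{2} - |H| - k \;=\; \tfrac{n-k+1}{2} - |H|.
\]
Finally, $F_m \cup R_m = (F \setminus R) \cup (R \setminus F) = (F \cup R) \setminus (F \cap R)$, and Lemma~\ref{Lem:FpR}(a) gives $F \cup R = V(L)$ and $|F \cap R| \leq k$, so $|F_m \cup R_m| = |L| - |F \cap R| \geq |L| - k$.

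There is essentially no obstacle here: the lemma is an arithmetic consequence of the semi-degree hypothesis combined with the structural bound from Lemma~\ref{Lem:FpR}(a). The only subtlety is making sure that the initial in-neighbor (respectively out-neighbor) counting step is legitimate, which requires $V(H) \neq \emptyset$; this is guaranteed by the standing assumption that $L$ fails to be a one-to-many $k$-DDPC.
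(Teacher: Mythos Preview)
Your proof is correct and follows essentially the same approach as the paper: pick a vertex of $H$, count its in-/out-neighbours inside $L$ to bound $|R|$ and $|F|$, then subtract the bound $|F\cap R|\le k$ from Lemma~\ref{Lem:FpR}(a) to obtain the bounds on $|F_m|$, $|R_m|$, and $|F_m\cup R_m|$. The only cosmetic difference is that the paper writes $|R_m|=|R|-|R\cap F|$ as an equality (which it is) rather than an inequality.
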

	
	\begin{proof}
		For every vertex $x \in V(H)$, $d^{-}_{L}(x) \geq \delta^{0}(D) - (\lvert H \rvert - 1) \geq (n + k + 1)/ 2 - \lvert H \rvert$, and so $\lvert R \rvert \geq (n + k + 1) / 2 - \lvert H \rvert$. The inequality $\lvert F \rvert \geq (n + k + 1)  /2 -\lvert H \rvert$ can be proved similarly. By Lemma~\ref{Lem:FpR}, $\lvert R \cap F\rvert \leq k$, so $\lvert R_{m} \rvert = \lvert R \rvert - \lvert R \cap F \rvert \geq (n + k + 1) / 2 -\lvert H \rvert - k = (n - k + 1) / 2 -\lvert H \rvert$. Similarly, we can prove the inequality $\lvert F_{m} \rvert \geq (n - k + 1)/2 -\lvert H \rvert$. 
		
		By Lemma~\ref{Lem:FpR}, $\lvert F \cup R \rvert = \lvert L \rvert$ and $\lvert F \cap R \rvert \leq k$. Together with the definition of $F_{m}$ and $R_{m}$, $\lvert F \cup R \rvert = \lvert F_{m} \rvert + \lvert R_{m} \rvert + \lvert F \cap R \rvert$, so $\lvert R_{m} \cup F_{m} \rvert = \lvert F_{m} \rvert + \lvert R_{m} \rvert = \lvert F \cup R \rvert - \lvert F \cap R \rvert \geq \lvert L \rvert - k$. \qedhere
		\end{proof}	
	
	We rewrite Theorem~\ref{The:main} and give the proof now:
	
	\begin{the 1}
		
		Let $D$ be a semicomplete digraph of order $n\geq (9k)^{5}$, where $k$ $(\geq 2)$ is an integer. If $\delta^{0}(D)\geq \lceil (n+k-1)/2\rceil$, then $D$ has a one-to-many $k$-DDPC for any disjoint source set $S$ and sink set $T$, where $S = \{s\}, T = \{t_{1}, \dots, t_{k}\}$.
		
	\end{the 1}
	
	\begin{proof}
		
		Recall that at the beginning of Section 3, we supposed that $L$ is not a one-to-many $k$-DDPC. It suffices to find a larger $S$-$T$ path more than $L$, and then this produces a contradiction and thus we prove Theorem~\ref{The:main}. Our argument is divided into the following three cases.
		
		\begin{case}
			$\lvert H \rvert = 1$.
		\end{case}
		Let $V(H)$ = $\{h\}$. Since $\lvert H \rvert = 1$, we have $d^{-}_{L}(h) = \lvert R \rvert$, $d^{+}_{L}(h) = \lvert F \rvert$ and $d^{+}_{L}(h) + d^{-}_{L}(h) = d^{+}_{D}(h) + d^{-}_{D}(h)$. According to whether $n + k$ is even, we distinguish the following two subcases.
		
		\begin{case 1.1}
			$n + k$ is even.
		\end{case 1.1}
		When $n + k$ is even, we have $\lceil \frac{n + k - 1}{2}\rceil = \frac{n + k}{2}$ and $d^{+}_{L}(h) + d^{-}_{L}(h) \geq 2\delta^{0}(D) \geq n +k$. Consequently, $\lvert F \rvert + \lvert R \rvert \geq n +k$. By Lemma~\ref{Lem:FpR}, $\lvert R \cup F \rvert = \lvert L \rvert = n - 1$. Therefore, $\lvert R \cap F \rvert = \lvert R \rvert + \lvert F \rvert - \lvert R \cup F \rvert \geq (n + k) - (n -1) = k + 1$, which implies that there exists a path $P_{i}$ such that $u, v \in R \cap F$. Without loss of generality, assume that $u$ precedes $v$. Arguing similarly as in the proof of Lemma~\ref{Lem:FpR}, we can find a larger $S$-$T$ path, say $L^{*}$, which contains $h$. Consequently, $L^{*}$ is a one-to-many $k$-DDPC in $D$, a contradiction.
		
		\begin{case 1.2}
			$n+k$ is odd.
		\end{case 1.2}
		
		When $n+k$ is odd, we have $\lceil(n+k-1) / 2\rceil = (n+k-1) / 2$. Therefore, $d^{+}_{L}(h) = d^{+}_{D}(h) \geq (n+k-1) / 2$, $d^{-}_{L}(h) = d^{-}_{D}(h) \geq (n+k-1) / 2$, and $d^{+}_{L}(h) + d^{-}_{L}(h) \geq 2\delta^{0}(D) \geq n+k-1$. Consequently, $\lvert R \rvert + \lvert F \rvert \geq n + k -1$. Since $\lvert L \rvert = n-1$, $\lvert R \cap F \rvert = \lvert R \rvert + \lvert F \rvert - \lvert R \cup F \rvert \geq (n+k-1) - (n-1) = k$. By Lemma~\ref{Lem:FpR}, $\lvert R \cap F\rvert \leq k$, so $\lvert R \cap F\rvert = k$. According to the argument in Lemma~\ref{Lem:connect}, $d^{+}_{L}(h) + d^{-}_{L}(h) \leq \lvert L \rvert + k = n+k-1$. Therefore, $d^{+}_{L}(h) + d^{-}_{L}(h) = n+k-1$, and then $d^{+}_{L}(h) = d^{-}_{L}(h) = (n+k-1) / 2$. Thus we get $\lvert F \rvert = d^{+}_{L}(h) = (n+k-1) / 2$, $\lvert R \rvert = d^{-}_{L}(h) = (n+k-1) / 2$ and $\lvert R_{m} \rvert = \lvert F_{m} \rvert = (n+k-1) / 2 - k = (n-k-1) / 2$ by Lemma~\ref{Lem:bound}.
		
		Note that each $y \in F_{m}$ satisfies $d^{+}_{R_{m}}(y) \geq \delta^{0}(D) - (\lvert F \rvert - 1) = 1$, so there is at least one vertex $x_{1} \in R_{m}$ such that $yx_{1} \in A(D)$. By Lemma~\ref{Lem:FpR}, we have $s \in F_{m}$, so there exists a vertex $x_{1} \in R_{m}$ in $P_{i}$ such that $sx_{1} \in A(D)$. As $\lvert (R \cap F) \cap V(P_{i}) \rvert \leq 1$ and $\lvert R \cap F \rvert = k$, there is a vertex $s^{+}_{j} \in F$ in $P_{j}$. Together with the fact that $x^{-}_{1} \in R$, there exists an $x^{-}_{1}$-$s^{+}_{j}$ path whose inner vertex is $h$. Let $P^{*}_{i} := sx_{1}P_{i}t_{i}$ and $P^{*}_{j} := sP_{i}x^{-}_{1}hs^{+}_{j}P_{j}t_{j}$. Therefore, we obtain a
		larger $S$-$T$ path, say $L^{*}$, which contains $h$. Consequently, $L^{*}$ is a one-to-many $k$-DDPC in $D$, this produces a contradiction (see Figure~\ref{fig:odd}).
		
		\begin{figure}[htb]
			\centering
			\begin{tikzpicture}
				\filldraw[black]    (0, 0)  circle (3pt)  node [anchor=east] {$s$};
				\filldraw[black]    (4, 1)  circle (3pt)  node [anchor=north] {$x^{-}_{1}$};
				\filldraw[black]    (5, 1.1)  circle (3pt)  node [anchor=north] {$x_{1}$};
				\filldraw[black]    (8, 1.2)  circle (3pt)  node [anchor=west] {$t_{i}$};
				\filldraw[black]    (1, -0.9)  circle (3pt)  node [anchor=north] {$s^{+}_{j}$};
				\filldraw[black]    (8, -1.2)  circle (3pt)  node [anchor=west] {$t_{j}$};
				\filldraw[black]    (2.5, 0)  circle (3pt)  node [anchor=east] {$h$};
				
				\draw[line width=2pt] [-latex]      (0,0) .. controls (1, 1) and (2, 1) .. (4, 1) ;
				\draw[line width=2pt] [-latex]     (0,0) .. controls (2, 2) and (3, 2) .. (5, 1.1) ;
				\draw[line width=2pt] [-latex]      (5, 1.1) ..controls (6, 1.2) and (7, 1.2) .. (8, 1.2) ;
				\draw[] [-latex]     (0,0) .. controls (0.4,-0.6) and (0.5, -0.6) .. (1,-0.9) ;
				\draw[line width=2pt] [-latex]    (1,-0.9) .. controls (2.5,-1.2) and (6.5, -1.2) .. (8,-1.2) ;
				
				\draw[line width=2pt] [-latex]     (4, 1) -- (2.5, 0) ;
				\draw[line width=2pt] [-latex]     (2.5, 0) -- (1, -0.9) ;
				\draw[] [-latex]      (4, 1) -- (5, 1. 1) ;
				
			\end{tikzpicture}
			\caption{}
			\label{fig:odd}
		\end{figure}
		
		\begin{case}
			$2 \leq \lvert H \rvert \leq n / 2 - n / (50k)$.
		\end{case}
		
		By Lemma~\ref{Lem:bound}, $\lvert R_{m} \rvert \geq (n - k + 1) / 2 -\lvert H \rvert \geq n / (53k)$, so there exists a path $P_{j}$ which contains at least $n / (53k^{2})$ vertices from $R_{m}$. Without loss of generality, assume that $j = 1$. Let $A$ be the set of vertices belonging to $R_{m}$ in $P_{1}$. We use $A_{1}$ ($A_{2}$, respectively) to denote the subpath of $A$ which contains the first (last, respectively) $n / (110k^{2})$ inner vertices of $A$.
		
		The first vertex of $A$ is denoted by $t$. For any vertex $a\in V(t^{+}P_{1}t_{1})$, Lemma~\ref{Lem:FpR} implies that $a \in R$ and $N^{-}_{D}(a) \subseteq V(L)$. By Lemma~\ref{Lem:bound}, $\lvert F \rvert \geq (n + k + 1) / 2 -\lvert H \rvert$ and so 
		\begin{equation}
			d^{-}_{L}(a) \geq \delta^{0}(D) \geq (n+k-1) / 2 \geq n + k - \lvert H \rvert - \lvert F \rvert = \lvert L \rvert - \lvert F \rvert + k. \label{equ:ine} 
		\end{equation}
		
		\begin{case 2.1}
			There are two vertices $a_{1} \in A_{1}$ and $a_{2} \in A_{2}$ such that $a_{1}a_{2} \in A(D)$.
			
		\end{case 2.1}
		
		By inequality~\eqref{equ:ine}, we have $d^{-}_{L}(a^{+}_{1}) \geq \lvert L \rvert - \lvert F \rvert + k$ and 
		\begin{equation}
			\begin{split}
				\lvert N^{-}_{L}(a^{+}_{1}) \cap F^{-} \rvert &= \lvert N^{-}_{L}(a^{+}_{1}) \rvert + \lvert F^{-} \rvert - \lvert N^{-}_{L}(a^{+}_{1}) \cup F^{-}\rvert \\
				&\geq d^{-}_{L}(a^{+}_{1}) + (\lvert F \rvert - k) - (\lvert L \rvert - 1) \\
				&\geq \lvert L \rvert - \lvert F \rvert + k + (\lvert F \rvert - k) - (\lvert L \rvert - 1) = 1 \nonumber
			\end{split}
		\end{equation}
		(By the definition of $F^{-}$, we have $F^{-} \cap T = \emptyset$. Corollary~\ref{Cor:RF} implies that $F^{-} \cap R = \emptyset$ and $a^{+}_{1} \notin F^{-}$, thus $\lvert N^{-}_{L}(a^{+}_{1}) \cup F^{-}\rvert \leq \lvert L \rvert - 1$. If $s \in F$, then $s^{-}$ does not exist. For each $P_{i}$, if $s^{+}_{i} \in F$, then there is only one $s$ satisfying $s \in F^{-}$. Therefore, $\lvert F^{-} \rvert \geq \lvert F \rvert - k$). This implies that there are two vertices $w \in N^{-}_{L}(a^{+}_{1}) \cap F^{-}$ and $w^{+} \in F$. Lemma~\ref{Lem:FpR} implies that $F \cap V(t^{+}P_{1}t_{1}) = \emptyset$, so $w^{+} \in V(s^{+}P_{1}t)$ or $w^{+} \in V(P_{i}\setminus \{s\})$ $(i \neq 1)$. Therefore, $w \in V(sP_{i}t^{-})$ or $w \in V(p_{i} \setminus \{t_{i}\})$ $(i \neq 1)$. By Lemma~\ref{Lem:connect}, $H$ is strong. When $\lvert H \rvert \geq 3$, by Theorem~\ref{The:Camion}, we get that $H$ contains a Hamiltonian cycle, say $C$. For a vertex $x$ on $C$, its predecessor on $C$ is denoted by $x^{-}$ and its successor on $C$ is denoted by $x^{+}$. When $\lvert H \rvert \geq 3$, there exists a vertex $u \in V(H)$ such that $uw^{+} \in A(D)$ since $w^{+} \in F$. As $a^{-}_{2} \in R_{m}$, there is an arc from $a^{-}_{2}$ to $u^{+}$. Note that there is a Hamiltonian path from $u^{+}$ to $u$ in $H$. When $\lvert H \rvert = 2$, let $V(H)$ = $\{u, v\}$. There exists a vertex, say $v \in V(H)$, such that $vw^{+} \in A(D)$ since $w^{+} \in F$. As $a^{-}_{2} \in R_{m}$, there is an arc from $a^{-}_{2}$ to $u$. Note that $uv, vu \in A(H)$ since $H$ is strong.
		
		If $w \in V(sP_{1}t^{-})$, then according to the argument above, there exists a path $a^{-}_{2}Q_{1}w^{+}$, where $Q_{1}$ contains all vertices in $H$. Let $P^{*}_{1} := sP_{1}wa^{+}_{1}P_{1}a^{-}_{2}Q_{1}w^{+}P_{1}a_{1}a_{2}P_{1}t_{1}$. Now we obtain a larger $S$-$T$ path $L^{*}$ which is a one-to-many $k$-DDPC in $D$. This produces a contradiction (see Figure~\ref{fig:spt}). 
		\begin{figure}[htb]
			\centering
			\begin{tikzpicture}
				\filldraw[black]    (0, 0)  circle (3pt)  node [anchor=north] {$s$};
				\filldraw[black]    (1.5, 0)  circle (3pt)  node [anchor=south] {$w$};
				\filldraw[black]    (2.5, 0)  circle (3pt)  node [anchor=north] {$w^{+}$};
				\filldraw[black]    (4, 0)  circle (3pt)  node [anchor=south] {$a_{1}$};
				\filldraw[black]    (5, 0)  circle (3pt)  node [anchor=south] {$a^{+}_{1}$};
				\filldraw[black]    (6.5, 0)  circle (3pt)  node [anchor=north] {$a^{-}_{2}$};
				\filldraw[black]    (7.5, 0)  circle (3pt)  node [anchor=south] {$a_{2}$};
				\filldraw[black]    (9, 0)  circle (3pt)  node [anchor=west] {$t_{1}$};
				
				\draw[line width=2pt] [-latex]      (1.5, 0) .. controls (2.5, -1.5) and (4, -1.5) .. (5, 0) ;
				\draw[line width=2pt] [-latex]     (4, 0) .. controls (5, -1.5) and (6.5, -1.5) .. (7.5, 0) ;
				\draw[line width=2pt] [-latex]     (6.5, 0) .. controls (5, 1.5) and (4, 1.5) .. (2.5, 0) node[midway, sloped, above] {$Q_{1}$};
				
				\draw[line width=2pt] [-latex]     (0, 0) -- (1.5, 0) ;
				\draw[] [-latex]      (1.5, 0) -- (2.5, 0) ;
				\draw[line width=2pt] [-latex]     (2.5, 0) -- (4, 0) ;
				\draw[] [-latex]      (4, 0) -- (5, 0) ;
				\draw[line width=2pt] [-latex]      (5, 0) -- (6.5, 0) ;
				\draw[] [-latex]      (6.5, 0) -- (7.5, 0) ;
				\draw[line width=2pt] [-latex]      (7.5, 0) -- (9, 0) ;
				
			\end{tikzpicture}
			
			\caption{}
			\label{fig:spt}
		\end{figure}
		
		If $w \in V(P_{i}\setminus \{t_{i}\})$ $(i \neq 1)$, then according to the argument above, we can also find a path $a^{-}_{2}Q_{2}w^{+}$, where $Q_{2}$ contains all vertices in $H$. Let $P^{*}_{1} := sP_{1}a_{1}a_{2}P_{1}t_{1}$ and $P^{*}_{i} := sP_{i}wa^{+}_{1}P_{1}a^{-}_{2}Q_{2}w^{+}P_{i}t_{i}$. Therefore, we obtain a larger $S$-$T$ path $L^{*}$ which is a one-to-many $k$-DDPC in $D$. This produces a contradiction (see Figure~\ref{fig:pt}).
		
		\begin{figure}[htb]
			\centering
			\begin{tikzpicture}
				
				\filldraw[black]    (0, 0)  circle (3pt)  node [anchor=east] {$s$};
				\filldraw[black]    (2, 0.9)  circle (3pt)  node [anchor=north] {$a_{1}$};
				\filldraw[black]    (3, 1)  circle (3pt)  node [anchor=south] {$a^{+}_{1}$};
				\filldraw[black]    (6, 1.1)  circle (3pt)  node [anchor=south] {$a^{-}_{2}$};
				\filldraw[black]    (7, 1.1)  circle (3pt)  node [anchor=north] {$a_{2}$};
				\filldraw[black]    (8, 1.1)  circle (3pt)  node [anchor=west] {$t_{1}$};
				\filldraw[black]    (4, -1.1)  circle (3pt)  node [anchor=north] {$w$};
				\filldraw[black]    (5, -1.1)  circle (3pt)  node [anchor=north] {$w^{+}$};
				\filldraw[black]    (8, -1.1)  circle (3pt)  node [anchor=west] {$t_{i}$};
				
				\draw[line width=2pt] [-latex]      (0, 0) .. controls (0.5, 0.9) and (1.5, 0.9) .. (2, 0.9) ;
				\draw[line width=2pt] [-latex]     (2, 0.9) .. controls (3.5, 2.5) and (5.5, 2.5) .. (7, 1.1) ;
				\draw[line width=2pt] [-latex]     (0, 0) .. controls (1, -1.1) and (3, -1.1) .. (4, -1.1) ;
				\draw[line width=2pt] [-latex]    (4, -1.1) .. controls (3, -0.5) and (3, 0.6) .. (3, 1) ;
				\draw[line width=2pt] [-latex]    (6, 1.1) .. controls (6, 0.5) and (6, -0.3) .. (5, -1.1) node[midway, right] {$Q_{2}$};
				
				\draw[] [-latex]     (2, 0.9) -- (3, 1) ;
				\draw[line width=2pt] [-latex]     (3, 1) -- (6, 1.1) ;
				\draw[] [-latex]      (6, 1.1) -- (7, 1.1) ;
				\draw[line width=2pt] [-latex]      (7, 1.1) -- (8, 1.1) ;
				\draw[] [-latex]      (4, -1.1) -- (5, -1.1) ;
				\draw[line width=2pt] [-latex]      (5, -1.1) -- (8, -1.1) ;
				
			\end{tikzpicture}
			
			\caption{}
			\label{fig:pt}
		\end{figure}
		
		\begin{case 2.2}
			Case 2.1 does not hold.
		\end{case 2.2}
		
		By the definition of $F^{-}_{m}$, $F^{-}_{m} \cap V(tP_{1}t_{1}) = \emptyset$. For any vertex $a \in A_{2}$, Lemma~\ref{Lem:FpR} implies that $N^{-}_{D}(a) \subseteq V(L)$. By the assumption, $N^{-}_{D}(a) \subseteq V(L)\setminus V(A_{1})$. By Lemma~\ref{Lem:bound}, $\lvert F_{m} \rvert \geq (n - k + 1) / 2 -\lvert H \rvert$. 
		Similar to the argument in \eqref{equ:ine}, we get that $d^{-}_{L-A_{1}}(a) \geq \delta^{0}(D) \geq (n + k -1) / 2 \geq n - \lvert H \rvert - \lvert F_{m} \rvert \geq \lvert L - A_{1} \rvert + \lvert A_{1} \rvert - \lvert F^{-}_{m} \rvert - k$ (recall that $\lvert F^{-} \rvert \geq \lvert F \rvert - k$, so $\lvert F \rvert \leq \lvert F^{-} \rvert + k$ and $\lvert F_{m} \rvert \leq \lvert F^{-}_{m} \rvert + k$). By the definition of $F^{-}_{m}$, $F^{-}_{m} \cap V(A_{1}) = \emptyset$, and so
		\begin{equation}
			\begin{split}
				\lvert N^{-}_{L-A_{1}}(a) \cap F^{-}_{m} \rvert &\geq \lvert L - A_{1} \rvert + \lvert A_{1} \rvert - \lvert F^{-}_{m} \rvert - k + \lvert F^{-}_{m} \rvert - \lvert L - A_{1} \rvert \\
				&= \lvert A_{1} \rvert - k = (n - 110k^{3}) / (110k^{2}). \label{equ:intersection}
			\end{split}
		\end{equation}
		
		Let $I_{1} := sP_{1}t$ and $I_{i} := P_{i} \setminus \{t_{i}\}$ ($i \neq 1$). We use $G_{i}$ to denote the auxiliary bipartite graph whose vertex sets are $V(A_{2})$ and $V(I_{i}) \cap F^{-}_{m}$ ($i = 1, \dots , k$). For any $a \in V(A_{2})$ and $w \in V(I_{i}) \cap F^{-}_{m}$, if $wa \in A(D)$, then there is an edge between $a$ and $w$ in each $G_{i}$. As $F^{-}_{m} \cap V(tP_{1}t_{1}) = \emptyset$, $F^{-}_{m} \subseteq V(I_{1}) \cup \dots \cup V(I_{k})$ and the edges of $G_{1}\cup \dots \cup G_{k}$ are equivalent to the arcs which are from $F^{-}_{m}$ to $A_{2}$ in $D$. Since $\lvert N^{-}_{L-A_{1}}(a) \cap F^{-}_{m} \rvert \geq (n - 110k^{3}) / (110k^{2})$, there exists a $G_i$ satisfying $$e(G_{i}) \geq \frac{\lvert A_{2} \rvert (n - 110k^{3})}{110k^{3}} \geq \frac{n(n - 110k^{3})}{12100k^{5}} \geq 3n \geq 3\lvert G_{i} \rvert.$$ 
		This implies that $G_{i}$ is not planar, so there are vertices $a_{1}$, $a_{2} \in V(A_{2})$ and $w_{1}$, $w_{2} \in V(I_{i}) \cap F^{-}_{m}$ such that the edges $w_{1}a_{1}$, $w_{2}a_{2}$ cross in $G_{i}$.
		
		We first consider the case that $i = 1$, and then $w_{1}$, $w_{2} \in V(sP_{1}t) \cap F^{-}_{m}$ and $w^{+}_{1}$, $w^{+}_{2} \in F_{m}$. Together with the fact that $a^{-}_{1}$, $a^{-}_{2} \in R_{m}$, we can find disjoint paths $a^{-}_{j}Q_{j}w^{+}_{j}$ ($j = 1, 2$), where the vertices of $Q_{j}$ lie in $H$ and $\lvert Q_{1} \cup Q_{2} \rvert \geq 2$. Particularly, when $\lvert H \rvert = 2$, say $V(H) = \{u, v\}$, we have $Q_{1} = u$ and $Q_{2}= v$, or $Q_{1}= v$ and $Q_{2} = u$. Let $P^{*}_{1} := sP_{1}w_{1}a_{1}P_{1}a^{-}_{2}Q_{2}w^{+}_{2}P_{1}a^{-}_{1}Q_{1}w^{+}_{1}P_{1}w_{2}a_{2}P_{1}t_{1}$. Thus we obtain a larger $S$-$T$ path $L^{*}$ which contains at least $\lvert L \rvert + 2$ vertices, this produces a contradiction (see Figure~\ref{fig:bipar}).
		
		\begin{figure}[htb]
			\centering
			\begin{tikzpicture}
				
				\filldraw[black]    (0, 0)  circle (3pt)  node [anchor=east] {$s$};
				\filldraw[black]    (1.2, 0)  circle (3pt)  node [anchor=south] {$w_{1}$};
				\filldraw[black]    (2, 0)  circle (3pt)  node [anchor=north] {$w^{+}_{1}$};
				\filldraw[black]    (3.2, 0)  circle (3pt)  node [anchor=south] {$w_{2}$};
				\filldraw[black]    (4, 0)  circle (3pt)  node [anchor=north] {$w^{+}_{2}$};
				\filldraw[black]    (5.2, 0)  circle (3pt)  node [anchor=north] {$a^{-}_{1}$};
				\filldraw[black]    (6, 0)  circle (3pt)  node [anchor=south] {$a_{1}$};
				\filldraw[black]    (7.2, 0)  circle (3pt)  node [anchor=north] {$a^{-}_{2}$};
				\filldraw[black]    (8, 0)  circle (3pt)  node [anchor=south] {$a_{2}$};
				\filldraw[black]    (9.2, 0)  circle (3pt)  node [anchor=west] {$t_{1}$};
				
				\draw[line width=2pt] [-latex]      (1.2, 0) .. controls (3, -1.8) and (4.2, -1.8) .. (6, 0) ;
				\draw[line width=2pt] [-latex]     (3.2, 0) .. controls (5, -1.8) and (6.2, -1.8) .. (8, 0) ;
				\draw[line width=2pt] [-latex]    (5.2, 0) .. controls (4, 1.5) and (3.2, 1.5) .. (2, 0) node[midway, above] {$Q_{1}$};
				\draw[line width=2pt] [-latex]     (7.2, 0) .. controls (6, 1.5) and (5.2, 1.5) .. (4, 0) node[midway, above] {$Q_{2}$};
				
				\draw[line width=2pt] [-latex]     (0, 0) -- (1.2, 0) ;
				\draw[] [-latex]     (1.2, 0) -- (2, 0) ;
				\draw[line width=2pt] [-latex]      (2, 0) -- (3.2, 0) ;
				\draw[] [-latex]      (3.2, 0) -- (4, 0) ;
				\draw[line width=2pt] [-latex]      (4, 0) -- (5.2, 0) ;
				\draw[] [-latex]      (5.2, 0) -- (6, 0) ;
				\draw[line width=2pt] [-latex]      (6, 0) -- (7.2, 0) ;
				\draw[] [-latex]      (7.2, 0) -- (8, 0) ;
				\draw[line width=2pt] [-latex]      (8, 0) -- (9.2, 0) ;
				
			\end{tikzpicture}
			
			\caption{}
			\label{fig:bipar}
		\end{figure}
		
		We now consider the case that $i \neq 1$, without loss of generality, assume that $i = 2$. Consequently, $w_{1}$, $w_{2} \in V(P_{2} \setminus \{t_{2}\}) \cap F^{-}_{m}$ and $w^{+}_{1}$, $w^{+}_{2} \in F_{m}$. According to the argument above, $w_{1}a_{1}$ and $w_{2}a_{2}$ cross in $G_{2}$. Without loss of generality, assume that $a_{1}$ precedes $a_{2}$ in $P_{1}$ and $w_{2}$ precedes $w_{1}$ in $P_{2}$.
		By Theorem~\ref{The:Redei}, $H$ has a Hamiltonian path. Suppose that there is a Hamiltonian path from $u$ to $v$ in $H$. Since $a^{-}_{2} \in R_{m}$ and $w^{+}_{2} \in F_{m}$, we have $a^{-}_{2}u$, $vw^{+}_{2} \in A(D)$, which implies that there exists a path $a^{-}_{2}Qw^{+}_{2}$, where $Q$ contains all vertices in $H$. Let $P^{*}_{1} := sP_{2}w_{2}a_{2}P_{1}t_{1}$ and $P^{*}_{2} := sP_{1}a^{-}_{2}Qw^{+}_{2}P_{2}t_{2}$. Therefore, we obtain a larger $S$-$T$ path $L^{*}$ which is a one-to-many $k$-DDPC in $D$. This produces a contradiction (see Figure~\ref{fig:wpt}).
		
		\begin{figure}[htb]
			\centering
			\begin{tikzpicture}
				\filldraw[black]    (0, 0)  circle (3pt)  node [anchor=east] {$s$};
				\filldraw[black]    (4, 1.05)  circle (3pt)  node [anchor=south] {$a^{-}_{2}$};
				\filldraw[black]    (5, 1.1)  circle (3pt)  node [anchor=south] {$a_{2}$};
				\filldraw[black]    (8, 1.2)  circle (3pt)  node [anchor=west] {$t_{1}$};
				\filldraw[black]    (4, -1.05)  circle (3pt)  node [anchor=north] {$w_{2}$};
				\filldraw[black]    (5, -1.1)  circle (3pt)  node [anchor=north] {$w^{+}_{2}$};
				\filldraw[black]    (8, -1.2)  circle (3pt)  node [anchor=west] {$t_{2}$};
				
				\draw[line width=2pt] [-latex]      (0, 0) .. controls (0.6, 1.05) and (3, 1.05) .. (4, 1.05) ;
				\draw[line width=2pt] [-latex]     (4, 1.05) .. controls (4.3, 0.3) and (4.3, -0.2) .. (5, -1.1) node[near start, left] {$Q$};
				\draw[line width=2pt] [-latex]     (0, 0) .. controls (0.6, -1.05) and (3, -1.05) .. (4, -1.05) ;
				\draw[line width=2pt] [-latex]     (4, -1.05) .. controls (4.5, -0.2) and (4.5, -0.1) .. (5, 1.1) ;
				
				\draw[] [-latex]     (4, 1.05) -- (5, 1.1) ;
				\draw[line width=2pt] [-latex]      (5, 1.1) -- (8, 1.2) ;
				\draw[] [-latex]     (4, -1.05) -- (5, -1.1) ;
				\draw[line width=2pt] [-latex]      (5, -1.1) -- (8, -1.2) ;
				
			\end{tikzpicture}
			
			\caption{}
			\label{fig:wpt}
		\end{figure}
		
		\begin{case}
			$n / 2 - n / (50k) \leq \lvert H \rvert \leq \lceil (n-k) / 2\rceil - 1 = \lfloor(n-k+1) / 2\rfloor - 1$.
		\end{case}
		
		By Lemma~\ref{Lem:bound}, $\lvert R_{m} \rvert \geq (n-k+1) / 2 - \lvert H \rvert \geq 1$. Similarly, $\lvert F_{m} \rvert \geq 1$. By Lemma~\ref{Lem:FpR}, $\lvert R \cup F \rvert = \lvert L \rvert$, $\lvert R \cap F \rvert \leq k$ and $\lvert L \rvert - k \leq \lvert R_{m} \cup F_{m} \rvert = \lvert R_{m} \rvert + \lvert F_{m} \rvert \leq \lvert L \rvert$. Since $\lceil (n+k-1) / 2 \rceil + 1 \leq \lvert L \rvert \leq n / 2 + n / (50k)$, we deduce that $\lceil (n-k+1) / 2\rceil \leq \lvert R_{m} \rvert + \lvert F_{m} \rvert \leq n / 2 + n / (50k)$.
		
		Note that each $h \in V(H)$ satisfies
		\begin{equation*}
			\begin{split}
				d^{-}_{L}(h) \geq \delta^{-}(D) - (\lvert H \rvert - 1) &\geq \lceil(n+k-1) / 2\rceil - \lceil(n-k) / 2\rceil + 1 + 1 \\
				&\geq (n+k-1) / 2 - (n-k+1) / 2 + 2 = k + 1
			\end{split}
		\end{equation*}
		and so $\lvert R \rvert \geq k + 1$. Similarly, $d^{+}_{L}(h) \geq \delta^{+}(D) - (\lvert H \rvert - 1) \geq k + 1$,  so $\lvert F \rvert \geq k + 1$.
		For any vertex $x \in R_{m}$, Lemma~\ref{Lem:FpR} implies that $N^{-}_{D}(x) \subseteq V(L)$. Furthermore,
		$$\lvert L \rvert - \delta^{-}(D) \leq n / 2 + n / (50k) - (n+k-1) / 2 < n / (50k),$$
		so for every vertex $x \in R_{m}$ and a vertex set $Z_{1} \subseteq V(L)$ satisfying $\lvert Z_{1} \rvert \geq n / (50k)$, there exists a vertex $a_{1} \in Z_{1}$ such that $a_{1}x \in A(D)$. Similarly, for every vertex $y \in F_{m}$ and a vertex set $Z_{2} \subseteq V(L)$ satisfying $\lvert Z_{2} \rvert \geq n / (50k)$, there exists a vertex $a_{2} \in Z_{2}$ such that $ya_{2} \in A(D)$.
        
		\begin{case 3.1}
			$1 \leq \lvert R_{m} \rvert \leq k$.
		\end{case 3.1}
		By Lemma~\ref{Lem:FpR}, $\lvert (R \cap F) \cap V(P_{i}) \rvert \leq 1$. Together with the fact that $\lvert R \rvert \geq k + 1$, we deduce that there exists a path $P_{i}$ such that $x_{1} \in R$ and $x^{+}_{1} \in R_{m}$. Observe that $\lvert F_{m} \rvert \geq \lceil (n-k+1) / 2\rceil - k \geq n / 3$ since $\lvert R_{m} \rvert \leq k$, and so one of the $k$ paths of $L$, say $P_{j}$, contains at least $n / (3k)$ vertices from $F_{m}$. Thus there is a subpath $y_{1}P_{j}y_{2}$ on path $P_{j}$ such that $V(y_{1}P_{j}y_{2}) \subseteq F_{m}$ and $\lvert y_{1}P_{j}y_{2} \rvert = n / (20k)$. 
		
		We first consider the case that $j \neq i$, without loss of generality, assume that $i=1$ and $j=2$. There is a vertex $a_{1} \in V(y_{1}P_{2}y^{-}_{2})$ such that $a_{1}x^{+}_{1} \in A(D)$ since $x^{+}_{1} \in R_{m}$ and $\lvert y_{1}P_{2}y^{-}_{2} \rvert \geq n / (50k)$. As $x_{1} \in R$, there exists a vertex $h_{1} \in V(H)$ such that $x_{1}h_{1} \in A(D)$. According to the argument in Case 2.1, we deduce that $H$ has a Hamiltonian cycle. As $a^{+}_{1} \in F_{m}$, $h^{-}_{1}a^{+}_{1} \in A(D)$. Note that there is a Hamiltonian path from $h_{1}$ to $h^{-}_{1}$ in $H$. This implies that there exists a path $x_{1}Q_{1}a^{+}_{1}$, where $Q_{1}$ contains all vertices in $H$. Let $P^{*}_{1} := sP_{2}a_{1}x^{+}_{1}P_{1}t_{1}$ and $P^{*}_{2} := sP_{1}x_{1}Q_{1}a^{+}_{1}P_{2}t_{2}$. Now we obtain a larger $S$-$T$ path $L^{*}$ which is a one-to-many $k$-DDPC in $D$. This produces a contradiction (see Figure~\ref{fig:jneqi}).
		
		\begin{figure}[htb]
			\centering
			\begin{tikzpicture}
				\filldraw[black]    (0, 0)  circle (3pt)  node [anchor=east] {$s$};
				\filldraw[black]    (6, 1.15)  circle (3pt)  node [anchor=south] {$x_{1}$};
				\filldraw[black]    (7, 1.2)  circle (3pt)  node [anchor=south] {$x^{+}_{1}$};
				\filldraw[black]    (8, 1.2)  circle (3pt)  node [anchor=west] {$t_{1}$};
				\filldraw[black]    (4, -1.05)  circle (3pt)  node [anchor=north] {$a_{1}$};
				\filldraw[black]    (5, -1.1)  circle (3pt)  node [anchor=north] {$a^{+}_{1}$};
				\filldraw[black]    (8, -1.2)  circle (3pt)  node [anchor=west] {$t_{2}$};
				
				\draw[line width=2pt] [-latex]      (0,0) .. controls (0.5, 1.15) and (4, 1.15) .. (6, 1.15) ;
				\draw[line width=2pt] [-latex]     (0, 0) .. controls (0.5, -1.05) and (3, -1.05) .. (4, -1.05) ;
				\draw[line width=2pt] [-latex]     (4, -1.05) -- (7, 1.2) ;
				\draw[line width=2pt] [-latex]     (6, 1.15) -- (5, -1.1) node[near start, left] {$Q_{1}$};
				
				\draw[] [-latex]     (6, 1.15) -- (7, 1.2) ;
				\draw[line width=2pt] [-latex]      (7, 1.2) -- (8, 1.2) ;
				\draw[] [-latex]     (4, -1.05) -- (5, -1.1) ;
				\draw[line width=2pt] [-latex]     (5, -1.1) -- (8, -1.2) ;
				
			\end{tikzpicture}
			
			\caption{}
			\label{fig:jneqi}
		\end{figure}
		
		We next consider the case that $j = i$, without loss of generality, assume that $j=i=1$. Let $I_{1} := y_{1}P_{1}y_{3}$, $I_{2} := y^{+}_{3}P_{1}y_{4}$, $I_{3} := y^{+}_{4}P_{1}y_{5}$, and $I_{4} := y^{+}_{5}P_{1}y_{2}$, where $\lvert I_{1} \rvert = n / (240k)$, $\lvert I_{2} \rvert = n / (240k)$, $\lvert I_{3} \rvert  = n / (48k)$, and $\lvert I_{4} \rvert = n / (48k)$. According to the argument at the beginning of Case 3, there is a vertex $a_{3} \in V(I_{3})$ such that $a_{3}x^{+}_{1} \in A(D)$ since $x^{+}_{1} \in R_{m}$ and $\lvert I_{3} \rvert > n / (50k)$. Similarly, there are two vertices $a_{1} \in I_{1}$ and $a_{4} \in I_{4}$ such that $a_{1}a_{4} \in A(D)$ (since $a_{1} \in F_{m}$ and $\lvert I_{4} \rvert > n / (50k)$). As $x_{1} \in R$, there exists a vertex $h_{1} \in V(H)$ such that $x_{1}h_{1} \in A(D)$. For a vertex $a_{2} \in I_{2}$, we have $a_{2} \in F_{m}$, so $h^{-}_{1}a_{2} \in A(D)$. According to the argument in Case 2.1, we deduce that $H$ contains a Hamiltonian cycle, and so there is a Hamiltonian path from $h_{1}$ to $h^{-}_{1}$ in $H$. This implies that there exists a path $x_{1}Q_{2}a_{2}$, where $Q_{2}$ contains all vertices in $H$. Let $P^{*}_{1} := sP_{1}a_{1}a_{4}P_{1}x_{1}Q_{2}a_{2}P_{1}a_{3}x^{+}_{1}P_{1}t_{1}$. Thus we obtain a larger $S$-$T$ path $L^{*}$ which contains at least $\lvert H \rvert - \lvert I_{1} \rvert - \lvert I_{2} \rvert - \lvert I_{3} \rvert - \lvert I_{4} \rvert \geq n / 2 - n / (50k) - n / (20k) > 0$ vertices more than $L$. This produces a contradiction (see Figure~\ref{fig:jequali}).
		
		\begin{figure}[htb]
			\centering
			\begin{tikzpicture}
				
				\filldraw[black]    (0, 0)  circle (3pt)  node [anchor=north] {$s$};
				\filldraw[black]    (1.5, 0)  circle (3pt)  node [anchor=south] {$a_{1}$};
				\filldraw[black]    (2.5, 0)  circle (3pt)  node [anchor=north] {$a_{2}$};
				\filldraw[black]    (3.7, 0)  circle (3pt)  node [anchor=south] {$a_{3}$};
				\filldraw[black]    (5, 0)  circle (3pt)  node [anchor=south] {$a_{4}$};
				\filldraw[black]    (6.2, 0)  circle (3pt)  node [anchor=north] {$x_{1}$};
				\filldraw[black]    (7.2, 0)  circle (3pt)  node [anchor=south] {$x^{+}_{1}$};
				\filldraw[black]    (8, 0)  circle (3pt)  node [anchor=west] {$t_{1}$};
				
				\draw[line width=2pt] [-latex]      (1.5, 0) .. controls (2.5, -1.3) and (4, -1.3) .. (5, 0) ;
				\draw[line width=2pt] [-latex]     (3.7, 0) .. controls (4.7, -1.3) and (6.2, -1.3) .. (7.2, 0) ;
				\draw[line width=2pt] [-latex]     (6.2, 0) .. controls (5, 1.3) and (3.7, 1.3) .. (2.5, 0) node[midway, above] {$Q_{2}$};
				
				\draw[line width=2pt] [-latex]     (0, 0) -- (1.5, 0) ;
				\draw[] [-latex]     (1.5, 0) -- (2.5, 0) ;
				\draw[line width=2pt] [-latex]      (2.5, 0) -- (3.7, 0) ;
				\draw[] [-latex]      (3.7, 0) -- (5, 0) ;
				\draw[line width=2pt] [-latex]      (5, 0) -- (6.2, 0) ;
				\draw[] [-latex]      (6.2, 0) -- (7.2, 0) ;
				\draw[line width=2pt] [-latex]      (7.2, 0) -- (8, 0) ;
				
			\end{tikzpicture}
			
			\caption{}
			\label{fig:jequali}
		\end{figure}
		
		\begin{case 3.2}
			$\lvert F_{m} \rvert = 1$.
		\end{case 3.2}
		In this case, $n-k$ is odd, $\lvert H \rvert = (n-k+1) / 2 - 1$ and $\lvert L \rvert = (n+k-1) / 2 + 1$ by Lemma~\ref{Lem:bound}. Recall that $\lvert F \rvert \geq k + 1$ at the beginning of Case 3.
		By Lemma~\ref{Lem:FpR}, $\lvert R \cap F\rvert \leq k$ and $\lvert (R \cap F) \cap V(P_{j}) \rvert \leq 1$.  Therefore, $s \in F_{m}$ and there exists exactly a vertex $s^{+}_{j}$ in each $P_{j}$ such that $s^{+}_{j} \in R \cap F$ by Lemma~\ref{Lem:FpR}. As $\lvert R_{m} \rvert + \lvert F_{m} \rvert \geq \lceil (n-k+1) / 2 \rceil$, $\lvert R_{m} \rvert \geq \lceil (n-k+1) / 2 \rceil - 1\geq n / 3$. Consequently, one of the $k$ paths of $L$, say $P_{i}$, contains at least $n / (3k)$ vertices from $R_{m}$. There is a subpath $x_{1}P_{i}x_{2}$ on $P_{i}$ such that $V(x_{1}P_{i}x_{2}) \subseteq R_{m}$ and $\lvert x_{1}P_{i}x_{2} \rvert = n / (20k)$.
		
		Without loss of generality, assume that $i = 1$. There are two vertices $s \in F_{m}$ and $s^{+}_{2} \in R \cap F$ in $P_{2}$. According to the argument at the beginning of Case 3, there is a vertex $a_{1} \in V(x^{+}_{1}P_{1}x_{2})$ such that $sa_{1} \in A(D)$ since $s \in F_{m}$ and $\lvert x^{+}_{1}P_{1}x_{2} \rvert > n / (50k)$. As $s^{+}_{2} \in F$, there exists a vertex $h_{1} \in V(H)$ such that $h_{1}s^{+}_{2} \in A(D)$. By the assumption, $a^{-}_{1} \in R_{m}$, so there is an arc from $a^{-}_{1}$ to $h^{+}_{1}$. According to the argument in Case 2.1, we deduce that  $H$ contains a Hamiltonian cycle. Note that there is a 
		Hamiltonian path from $h^{+}_{1}$ to $h_{1}$ in $H$. This implies that there exists a path $a^{-}_{1}Qs^{+}_{2}$, where $Q$ contains all vertices in $H$. Let $P^{*}_{1} := sa_{1}P_{1}t_{1}$ and $P^{*}_{2} := sP_{1}a^{-}_{1}Qs^{+}_{2}P_{2}t_{2}$. Thus we obtain a larger $S$-$T$ path $L^{*}$ which is a one-to-many $k$-DDPC in $D$. This produces a contradiction (see Figure~\ref{fig:fmh}).

		\begin{figure}[htb]
			\centering
			\begin{tikzpicture}
				\filldraw[black]    (0,0)  circle (3pt)  node [anchor=east] {$s$};
				\filldraw[black]    (5, 1.1)  circle (3pt)  node [anchor=north] {$a^{-}_{1}$};
				\filldraw[black]    (6, 1.15)  circle (3pt)  node [anchor=north] {$a_{1}$};
				\filldraw[black]    (8, 1.2)  circle (3pt)  node [anchor=west] {$t_{1}$};
				\filldraw[black]    (2, -1)  circle (3pt)  node [anchor=north] {$s^{+}_{2}$};
				\filldraw[black]    (8, -1.2)  circle (3pt)  node [anchor=west] {$t_{2}$};
				
				\draw[line width=2pt] [-latex]     (0, 0) .. controls (1, 1.05) and (4, 1.05) .. (5, 1.1) ;
				\draw[line width=2pt] [-latex]      (6, 1.15) .. controls (6.6, 1.2) and (7.5, 1.2) .. (8, 1.2) ;
				\draw[] [-latex]     (0, 0) .. controls (0.5, -1) and (1.5, -1) .. (2, -1) ;
				\draw[line width=2pt] [-latex]     (2, -1) .. controls (4, -1.2) and (6, -1.2) .. (8, -1.2) ;
				\draw[line width=2pt] [-latex]     (0, 0) .. controls (2, 2) and (4, 2) .. (6, 1.15) ;
				\draw[line width=2pt] [-latex]     (5, 1.1) .. controls (4.5, 0.5) and (3, -0.5) .. (2, -1) node[midway, right] {$Q$};
				
				\draw[] [-latex]     (5, 1.1) -- (6, 1.15) ;
				
			\end{tikzpicture}
			\caption{}
			\label{fig:fmh}
		\end{figure}
		
		\begin{case 3.3}
			$\lvert F_{m} \rvert \geq 2$ and $\lvert R_{m} \rvert \geq k + 1$.
		\end{case 3.3}
		Since $\lvert R_{m} \rvert \geq k + 1$, there is a path $P_{i}$ such that $x_{1}, x_{2} \in R_{m}$, and $x_{1}$ precedes $x_{2}$. Similarly, as $\lvert F_{m} \rvert \geq 2$, there is a path $P_{j}$ such that $y_{1}, y_{2} \in F_{m}$, and $y_{1}$ precedes $y_{2}$.
		
		\begin{case 3.3.1}
			There exists a subpath $x_{1}P_{i}x_{2}$ on $P_{i}$ such that $\lvert x_{1}P_{i}x_{2} \rvert \geq n /(20k)$.
		\end{case 3.3.1}
		We first consider the case that $j \neq i$, without loss of generality, assume that $i = 1$, $j = 2$, and $y_{1} = y^{-}_{2}$. Lemma~\ref{Lem:FpR} implies that $V(x_{1}P_{i}x_{2}) \subseteq R_{m}$.
		Let $A_{1} := x_{1}P_{1}x_{3}$ and $A_{2} := x^{+}_{3}P_{1}x_{4}$, where $\lvert A_{1} \rvert = n / (48k)$ and $\lvert A_{2} \rvert = n / (48k)$. According to the argument at the beginning of Case 3, there is a vertex $a_{1} \in A_{1}$ such that $a_{1}x_{2} \in A(D)$ since $x_{2} \in R_{m}$ and $\lvert A_{1} \rvert > n / (50k)$. Similarly, as $y^{-}_{2} \in F_{m}$ and $\lvert A_{2} \rvert > n / (50k)$, there exists a vertex $a_{2} \in A_{2}$ such that $y^{-}_{2}a_{2} \in A(D)$. By Theorem~\ref{The:Redei}, $H$ has a Hamiltonian path. Suppose that there is a Hamiltonian path from $u$ to $v$ in $H$. Since $x^{-}_{2} \in R_{m}$ and $y_{2} \in F_{m}$, $x^{-}_{2}u$, $vy_{2} \in A(D)$. This implies that there exists a path $x^{-}_{2}Q_{1}y_{2}$, where $Q_{1}$ contains all vertices in $H$. Let $P^{*}_{1} := sP_{1}a_{1}x_{2}P_{1}t_{1}$ and $P^{*}_{2} := sP_{2}y^{-}_{2}a_{2}P_{1}x^{-}_{2}Q_{1}y_{2}P_{2}t_{2}$. Thus we obtain a larger $S$-$T$ path $L^{*}$ which contains at least $\lvert H \rvert - \lvert V(a_{1}P_{1}a_{2}) \rvert \geq n / 2 - n / (50k) - n / (24k) > 0$ vertices more than $L$. This produces a contradiction (see Figure~\ref{fig:xpx}).
		
		\begin{figure}[htb]
			\centering
			\begin{tikzpicture}
				\filldraw[black]    (0, 0)  circle (3pt)  node [anchor=east] {$s$};
				\filldraw[black]    (3, 0.8)  circle (3pt)  node [anchor=north] {$a_{1}$};
				\filldraw[black]    (4.5, 0.9)  circle (3pt)  node [anchor=south] {$a_{2}$};
				\filldraw[black]    (6, 1)  circle (3pt)  node [anchor=south] {$x^{-}_{2}$};
				\filldraw[black]    (7, 1)  circle (3pt)  node [anchor=north] {$x_{2}$};
				\filldraw[black]    (8, 1)  circle (3pt)  node [anchor=west] {$t_{1}$};
				\filldraw[black]    (2, -0.85)  circle (3pt)  node [anchor=north] {$y^{-}_{2}$};
				\filldraw[black]    (3,-0.9)  circle (3pt)  node [anchor=north] {$y_{2}$};
				\filldraw[black]    (8, -1)  circle (3pt)  node [anchor=west] {$t_{2}$};
				
				\draw[line width=2pt] [-latex]      (0, 0) .. controls (0.5, 0.8) and (2, 0.8) .. (3, 0.8) ;
				\draw[line width=2pt] [-latex]    (0, 0) .. controls (0.5, -0.85) and (1.5, -0.85) .. (2, -0.85) ;
				\draw[line width=2pt] [-latex]    (3, 0.8) .. controls (4, 2) and (6, 2) .. (7, 1) ;
				\draw[line width=2pt] [-latex]    (2, -0.85) .. controls (2.3, -0.5) and (2.7, 0.3) .. (4.5, 0.9) ;
				\draw[line width=2pt] [-latex]    (6, 1) .. controls (5.7, 0.5) and (5, -0.3) .. (3, -0.9) node[midway, right] {$Q_{1}$};
				
				\draw[] [-latex]     (3, 0.8) -- (4.5, 0.9) ;
				\draw[line width=2pt] [-latex]    (4.5, 0.9) -- (6, 1) ;
				\draw[] [-latex]    (6, 1) -- (7, 1) ;
				\draw[line width=2pt] [-latex]    (7, 1) -- (8, 1) ;
				\draw[] [-latex]    (2, -0.85) -- (3, -0.9) ;
				\draw[line width=2pt] [-latex]    (3, -0.9) -- (8, -1) ;
				
			\end{tikzpicture}
			\caption{}
			\label{fig:xpx}
		\end{figure}
		
		We next consider the case that $j = i$, without loss of generality, assume that $j = i = 1$ and $y_{1} = y^{-}_{2}$. The argument for the case that $j = i$ is similar to that of the case $j \neq i$, so there exists a path $x^{-}_{2}Q_{2}y_{2}$, where $Q_{2}$ contains all vertices in $H$. Let $P^{*}_{1} := sP_{1}y^{-}_{2}a_{2}P_{1}x^{-}_{2}Q_{2}y_{2}P_{1}a_{1}x_{2}P_{1}t_{1}$. Thus we obtain a larger $S$-$T$ path $L^{*}$ which contains at least $\lvert H \rvert - \lvert V(a_{1}P_{1}a_{2}) \rvert \geq n / 2 - n / (50k) - n / (24k) > 0$ vertices more than $L$. This produces a contradiction (see Figure~\ref{fig:jione}).  
		
		\begin{figure}[htb]
			\centering
			\begin{tikzpicture}
				
				\filldraw[black]    (0, 0)  circle (3pt)  node [anchor=east] {$s$};
				\filldraw[black]    (1, 0)  circle (3pt)  node [anchor=south] {$y^{-}_{2}$};
				\filldraw[black]    (2, 0)  circle (3pt)  node [anchor=north] {$y_{2}$};
				\filldraw[black]    (3.5, 0)  circle (3pt)  node [anchor=south] {$a_{1}$};
				\filldraw[black]    (5, 0)  circle (3pt)  node [anchor=south] {$a_{2}$};
				\filldraw[black]    (6.5, 0)  circle (3pt)  node [anchor=north] {$x^{-}_{2}$};
				\filldraw[black]    (7.5, 0)  circle (3pt)  node [anchor=south] {$x_{2}$};
				\filldraw[black]    (8.5, 0)  circle (3pt)  node [anchor=west] {$t_{1}$};
				
				\draw[line width=2pt] [-latex]      (1, 0) .. controls (2.5, -1.5) and (3.5, -1.5) .. (5, 0) ;
				\draw[line width=2pt] [-latex]     (3.5, 0) .. controls (5, -1.5) and (6, -1.5) .. (7.5, 0) ;
				\draw[line width=2pt] [-latex]     (6.5, 0) .. controls (5, 1.5) and (3.5, 1.5) .. (2, 0) node[midway, above] {$Q_{2}$};
				
				\draw[line width=2pt] [-latex]     (0, 0) -- (1, 0);
				\draw[] [-latex]     (1, 0) -- (2, 0);
				\draw[line width=2pt] [-latex]      (2, 0) -- (3.5, 0);
				\draw[] [-latex]      (3.5, 0) -- (5, 0);
				\draw[line width=2pt] [-latex]      (5, 0) -- (6.5, 0);
				\draw[] [-latex]      (6.5, 0) -- (7.5, 0);
				\draw[line width=2pt] [-latex]      (7.5, 0) -- (8.5, 0);
				
			\end{tikzpicture}
			\caption{}
			\label{fig:jione}
		\end{figure}
		
		\begin{case 3.3.2}
		Case 3.3.1 does not hold.
		
		\end{case 3.3.2}
		There is not a subpath $x_{1}P_{i}x_{2}$ in each path $P_{i}$ such that $\lvert x_{1}P_{i}x_{2} \rvert \geq n / (20k)$, so $\lvert R_{m} \rvert < n / (20)$. Recall that $\lvert R_{m} \rvert + \lvert F_{m} \rvert \geq \lceil (n-k+1) / 2\rceil$ at the beginning of Case 3, so $\lvert F_{m} \rvert \geq \lceil (n-k+1) / 2\rceil - n / (20) \geq n / 3$. One of the $k$ paths of $L$, say $P_{j}$, contains at least $n / (3k)$ vertices from $F_{m}$. Without loss of generality, assume that $j = 2$. We can find a subpath $y_{1}P_{2}y_{2}$ such that $ V(y_{1}P_{2}y_{2}) \subseteq F_{m}$ and $\lvert V(y_{1}P_{2}y_{2}) \rvert \geq n / (20k)$. There is a path $P_{i}$ such that $x_{1}, x_{2} \in R_{m}$ and $x_{1}$ precedes $x_{2}$ since $\lvert R_{m} \rvert \geq k + 1$. 
		
		We first consider the case that $j \neq i$, without loss of generality, assume that $i = 1$, $j = 2$, and $x_{2} = x^{+}_{1}$.
		Let $A_{2} := y_{3}P_{2}y_{2}$ and $A_{1} := y_{4}P_{2}y^{-}_{3}$, where $\lvert A_{2} \rvert = n / (48k)$ and $\lvert A_{1} \rvert = n / (48k)$. According to the argument at the beginning of Case 3, there is a vertex $a_{2} \in A_{2}$ such that $y_{1}a_{2} \in A(D)$ since $y_{1} \in F_{m}$ and $\lvert A_{2} \rvert > n / (50k)$. Similarly, as $x^{+}_{1} \in R_{m}$ and $\lvert A_{1} \rvert > n / (50k)$, there exists a vertex $a_{1} \in A_{1}$ such that $a_{1}x^{+}_{1} \in A(D)$. By Theorem~\ref{The:Redei}, $H$ has a Hamiltonian path. Assume that there is a Hamiltonian path from $u$ to $v$ in $H$. Since $x_{1} \in R_{m}$ and $y^{+}_{1} \in F_{m}$, $x_{1}u$, $vy^{+}_{1} \in A(D)$. This implies that there exists a path $x_{1}Q_{1}y^{+}_{1}$, where $Q_{1}$ contains all vertices in $H$. Let $P^{*}_{1} := sP_{1}x_{1}Q_{1}y^{+}_{1}P_{2}a_{1}x^{+}_{1}P_{1}t_{1}$ and $P^{*}_{2} := sP_{2}y_{1}a_{2}P_{2}t_{2}$. Thus we obtain a larger $S$-$T$ path $L^{*}$ which contains at least $\lvert H \rvert - \lvert V(a_{1}P_{2}a_{2}) \rvert \geq n / 2 - n / (50k) - n / (24k) > 0$ vertices more than $L$. This produces a contradiction (see Figure~\ref{fig:holdtwo}).
		
		\begin{figure}[htb]
			\centering
			\begin{tikzpicture}
				\filldraw[black]    (0, 0)  circle (3pt)  node [anchor=east] {$s$};
				\filldraw[black]    (5, 0.95)  circle (3pt)  node [anchor=south] {$x_{1}$};
				\filldraw[black]    (6, 1)  circle (3pt)  node [anchor=south] {$x^{+}_{1}$};
				\filldraw[black]    (8, 1.05)  circle (3pt)  node [anchor=west] {$t_{1}$};
				\filldraw[black]    (2, -0.85)  circle (3pt)  node [anchor=south] {$y_{1}$};
				\filldraw[black]    (3, -0.9)  circle (3pt)  node [anchor=north] {$y^{+}_{1}$};
				\filldraw[black]    (4.5, -0.95)  circle (3pt)  node [anchor=north] {$a_{1}$};
				\filldraw[black]    (6, -1)  circle (3pt)  node [anchor=south] {$a_{2}$};
				\filldraw[black]    (8, -1.05)  circle (3pt)  node [anchor=west] {$t_{2}$};
				
				\draw[line width=2pt] [-latex]    (0, 0) .. controls (0.7, 0.95) and (3, 0.95) .. (5, 0.95) ;
				\draw[line width=2pt] [-latex]      (0,0) .. controls (0.7, -0.85) and (1.2, -0.85) .. (2, -0.85) ;
				\draw[line width=2pt] [-latex]     (2, -0.85) .. controls (2.7, -2) and (5.3, -2) .. (6, -1) ;
				\draw[line width=2pt] [-latex]    (4.5, -0.95) .. controls (5.3, -0.2) and (5.7, 0.3) .. (6, 1) ;
				\draw[line width=2pt] [-latex]    (5, 0.95) .. controls (4, 0.3) and (3.5, -0.3) .. (3, -0.9) node[midway, left] {$Q_{1}$};
				
				\draw[] [-latex]    (5, 0.95) -- (6, 1) ;
				\draw[line width=2pt] [-latex]    (6, 1) -- (8, 1.05) ;
				\draw[] [-latex]    (2, -0.85) -- (3, -0.9) ;
				\draw[line width=2pt] [-latex]    (3, -0.9) -- (4.5, -0.95) ;
				\draw[] [-latex]    (4.5, -0.95) -- (6, -1) ;
				\draw[line width=2pt] [-latex]     (6, -1) -- (8, -1.05) ;
				
			\end{tikzpicture}
			\caption{}
			\label{fig:holdtwo} 
		\end{figure}
		
		We next consider the case that $i = j = 2$, without loss of generality, assume that $x_{2} = x^{+}_{1}$.
		Arguing similarly as that of the case $i \neq j$, we get that there exists a path $x_{1}Q_{2}y^{+}_{1}$, where $Q_{2}$ contains all vertices in $H$. Let $P^{*}_{2} := sP_{2}y_{1}a_{2}P_{2}x_{1}Q_{2}y^{+}_{1}P_{2}a_{1}x^{+}_{1}P_{2}t_{2}$. Thus we obtain a larger $S$-$T$ path $L^{*}$ which contains at least $\lvert H \rvert - \lvert V(a_{1}P_{2}a_{2}) \rvert \geq n / 2 - n / (50k) - n / (24k) > 0$ vertices more than $L$. This produces a contradiction (see Figure~\ref{fig:holdone}). 
		
		\begin{figure}[htb]
			\centering
			\begin{tikzpicture}
				
				\filldraw[black]    (0, 0)  circle (3pt)  node [anchor=east] {$s$};
				\filldraw[black]    (1.5, 0)  circle (3pt)  node [anchor=south] {$y_{1}$};
				\filldraw[black]    (2.5, 0)  circle (3pt)  node [anchor=north] {$y^{+}_{1}$};
				\filldraw[black]    (4, 0)  circle (3pt)  node [anchor=south] {$a_{1}$};
				\filldraw[black]    (5.5, 0)  circle (3pt)  node [anchor=south] {$a_{2}$};
				\filldraw[black]    (7, 0)  circle (3pt)  node [anchor=north] {$x_{1}$};
				\filldraw[black]    (8, 0)  circle (3pt)  node [anchor=south] {$x^{+}_{1}$};
				\filldraw[black]    (9, 0)  circle (3pt)  node [anchor=west] {$t_{2}$};
				
				\draw[line width=2pt] [-latex]      (1.5, 0) .. controls (3, -1.5) and (4, -1.5) .. (5.5, 0) ;
				\draw[line width=2pt] [-latex]     (4, 0) .. controls (5.5, -1.5) and (6.5, -1.5) .. (8, 0) ;
				\draw[line width=2pt] [-latex]     (7, 0) .. controls (5.5, 1.5) and (4, 1.5) .. (2.5, 0) node[midway, above] {$Q_{2}$};
				
				\draw[line width=2pt] [-latex]     (0, 0) -- (1.5, 0);
				\draw[] [-latex]     (1.5, 0) -- (2.5, 0);
				\draw[line width=2pt] [-latex]      (2.5, 0) -- (4, 0);
				\draw[] [-latex]      (4, 0) -- (5.5, 0);
				\draw[line width=2pt] [-latex]      (5.5, 0) -- (7, 0);
				\draw[] [-latex]      (7, 0) -- (8, 0);
				\draw[line width=2pt] [-latex]      (8, 0) -- (9, 0);
				
			\end{tikzpicture}
			\caption{}
			\label{fig:holdone} 
			\end{figure}
			\end{proof}
		
\vskip 1cm

\noindent {\bf Acknowledgement.} Yuefang Sun was supported by Yongjiang Talent Introduction Programme of Ningbo under Grant No. 2021B-011-G and Zhejiang Provincial Natural Science
Foundation of China under Grant No. LY20A010013. Xiaoyan Zhang was supported by NSFC under Grant No. 11871280.


\begin{thebibliography}{20}
		\bibitem{Bang-Jensen08}J. Bang-Jensen, G. Gutin, Digraphs: Theory, Algorithms and Applications, Springer, 2008.
		\bibitem{Bang-Jensen18}J. Bang-Jensen, G. Gutin, Classes of Directed Graphs, Springer, 2018.
		\bibitem{Camion1959}P. Camion. Chemins et circuits hamiltoniens des graphes complets, C. R. Acad. Sci. Paris, 249 (1959) 2151-2152.
		\bibitem{Cao18}H. Cao, B. Zhang, Z. Zhou, one-to-one disjoint path covers in digraphs, Theoretical Computer Science, 714 (2018) 27-35.
		\bibitem{Ferrara06}M. Ferrara, R. Gould, G. Tansey and T. Whalen, On $H$-linked Graphs, Graphs and Combinatorics, 22 (2006) 217-224.
		\bibitem{Ferrara13}M. Ferrara, M. Jacobson, F, Pfender, Degree conditions for $H$-linked digraphs, Combinatorics, Probability and Computing, 22 (5) (2013) 684–699.
		\bibitem{Jo13}S. Jo, J. H. Park, K. Y. Chwa. Paired 2-disjoint path covers and strongly Hamiltonian laceability of bipartite hypercube-like graphs, Information Sciences, 242 (2013) 103-112.
		\bibitem{Jo(1)13}S. Jo, J. H. Park, K. Y. Chwa, Paired many-to-many disjoint path covers in faulty hypercubes,
		Theoretical Computer Science 513 (1) (2013) 1–24.
		\bibitem{Khn08}D. K\"{u}hn, D. Osthus, A. Young, $k$-ordered hamilton cycles in digraphs, Journal of Combinatorial
		Theory, Series B, 98 (6) (2008) 1165–1180.
		\bibitem{Lim16}H. S. Lim, H. C. Kim, J. H. Park, Ore-type degree conditions for disjoint path covers in simple
		graphs, Discrete Mathematics 339 (2) (2016) 770–779.
		\bibitem{Liu10}D. Liu, J. Li, Many-to-many $n$-disjoint path covers in $n$-dimensional hypercubes, Information
		Processing Letters 110 (1415) (2010) 580–584.
		\bibitem{Park04}J. H. Park. One-to-many disjoint path covers in a graph with faulty elements, International Computing and Combinatorics Conference, Springer, Berlin, Heidelberg, (2004) 392-401.
		\bibitem{Park06}J. H. Park, H. C. Kim, H. S. Lim. Many-to-many disjoint path covers in hypercube-like interconnection networks with faulty elements, IEEE Transactions on Parallel and Distributed Systems, 17 (3) (2006) 227-240.
		\bibitem{Park08}J. H. Park, H. C. Kim, H. S. Lim. Many-to-many disjoint path covers in the presence of faulty elements, IEEE Transactions on Computers, 58 (4) (2008) 528-540.
		\bibitem{Redei1934}L. Redei, Ein kombinatorischer Satz, Acta Litt, Szeged, 7 (1934) 39-43.
		\bibitem{Shih11}Y. K. Shih, S. S. Kao, One-to-one disjoint path covers on $k$-ary $n$-cubes, Theoretical Computer
		Science 412 (35) (2011) 4513–4530.
		\bibitem{You15}L. You, J. Fan, Y. Han, X. Jia, One-to-one disjoint path covers on alternating group graphs, Theoretical Computer Science 562 (2015) 146–164.
		\bibitem{Zhou}Z. Zhou, Semi-degree conditions for one-to-many disjoint path covers in digraph, submitted.
		
	\end{thebibliography}
\end{document}